\newtheorem{theorem}{Theorem}[section]
\newtheorem{corollary}{Corollary}[section]
\newtheorem{definition}{Definition}[section]
\newtheorem{example}{Example}[section]
\newtheorem{lemma}{Lemma}[section]
\newtheorem{remark}{Remark}[section]
\newenvironment{proof}[1][Proof]{\noindent\textbf{#1.} }{\ \rule{0.5em}{0.5em}}
\begin{document}

\title{Permutative nonnegative matrices with prescribed spectrum\thanks{%
Supported by Universidad Cat\'{o}lica del Norte, Chile. }}
\author{ Ricardo L. Soto\thanks{%
E-mail addresses: rsoto@ucn.cl (R.L. Soto)} \\
%EndAName
{\small Dpto. Matem\'{a}ticas, Universidad Cat\'{o}lica del Norte, Casilla
1280}\\
{\small Antofagasta, Chile.}}
\date{}
\maketitle

\begin{abstract}
An $n\times n$ permutative matrix is a matrix in which every row is a
permutation of the first row. In this paper the result given by Paparella in
[Electron. J. Linear Algebra 31 (2016) 306-312] is extended to a more
general lists of real and complex numbers, and a negative answer to a
question posed by him is given.
\end{abstract}

\textit{AMS classification: \ \ 15A18.}

\textit{Key words: permutative matrices, nonnegative matrices.}

\section{Introduction}

\noindent The \textit{nonnegative inverse eigenvalue problem }(NIEP) is the
problem of characterizing all posible spectra of entrywise nonnegative
matrices. This problem remain unsolved. A complete solution is known only
for $n\leq 4.$ A list $\Lambda =\{\lambda _{1},\lambda _{2},\ldots ,\lambda
_{n}\}$ of complex numbers is said to be realizable if $\Lambda $ is the
spectrum of an $n\times n$ nonnegative matrix $A$. In this case $A$ \ is
said to be a realizing matrix. From the Perron-Frobenius theory, we have
that if $\{\lambda _{1},\lambda _{2},\ldots ,\lambda _{n}\}$ is the spectrum
of an $n\times n$ nonnegative matrix $A$ then $\rho (A)=\max\limits_{1\leq
i\leq n}\left\vert \lambda _{i}\right\vert $ is an eigenvalue of $A.$ This
eigenvalue is called the \textit{Perron eigenvalue} of $A$ and we shall
assume, in this paper, that $\rho (A)=\lambda _{1}.$ A matrix $A=[a_{ij}]$
is said to have \textit{constant row sums }if all its rows sum up to the
same constant, say $\alpha ,$ i.e., $\dsum\limits_{j=1}^{n}a_{ij}=\alpha ,$ $%
i=1,\ldots ,n.$\ The set of all matrices with constant row sums equal to $%
\alpha $ will be denoted by $\mathcal{CS}_{\alpha }.$\emph{\ }It is clear
that any matrix in\emph{\ }$\mathcal{CS}_{\alpha }$ has the eigenvector $%
\mathbf{e}=(1,1,\ldots ,1)^{T}$ corresponding to the eigenvalue $\alpha .$
We shall denote by $\mathbf{e}_{k}$ the n-dimensional vector with one in the 
$kth$ position and zeros elsewhere. The real matrices with constant row sums
are important because it is known that the problem of finding a nonnegative
matrix with spectrum $\Lambda =\{\lambda _{1},\ldots ,\lambda _{n}\}$ is
equivalent to the problem of finding a nonnegative matrix in \emph{\ }$%
\mathcal{CS}_{\lambda _{1}}$ with spectrum $\Lambda .$

\bigskip

\noindent The following definition, given in \cite{Paparella}, is due to
Charles Johnson:

\begin{definition}
Let $\mathbf{x}\in 
%TCIMACRO{\U{2102} }%
%BeginExpansion
\mathbb{C}
%EndExpansion
^{n}$ and let $P_{2},\ldots ,P_{n}$ be $n\times n$ permutation matrices. A
permutative matrix is any matrix of the form%
\begin{equation*}
P=\left[ 
\begin{array}{c}
\mathbf{x}^{T} \\ 
(P_{2}\mathbf{x})^{T} \\ 
\vdots \\ 
(P_{n}\mathbf{x})^{T}%
\end{array}%
\right] .
\end{equation*}
\end{definition}

\noindent It is clear that $P\in \mathcal{CS}_{S},$ where $S$ is the sum of
the entries of the vector $\mathbf{x}.$ In \cite{Paparella}, the author
prove that a list $\Lambda =\{\lambda _{1},\ldots ,\lambda _{n}\}$ of real
numbers of Suleimanova type \cite{Sule}, that is $\lambda _{1}>0\geq \lambda
_{2}\geq \ldots \geq \lambda _{n},$ is realizable by a permutative
nonnegative matrix. The author in \cite{Paparella} also pose the question:
can all relizable lists of real numbers be realized by a permutative
nonnegative matrix? The following result was announced by Suleimanova \cite%
{Sule} and proved by Perfect \cite{Perfect}.

\begin{theorem}
\label{Sulei}Let $\Lambda =\{\lambda _{1},\lambda _{2},\ldots ,\lambda
_{n}\} $ be a list of real numbers with $\lambda _{i}<0,$ $i=2,\ldots ,n.$
Then $\Lambda $ is the spectrum of an $n\times n$ nonnegative matrix if and
only if $\dsum\limits_{i=1}^{n}\lambda _{i}\geq 0.$
\end{theorem}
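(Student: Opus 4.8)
The statement is a biconditional, and I would treat the two directions separately, spending essentially all the effort on sufficiency. \emph{Necessity} is immediate: if $\Lambda$ is the spectrum of a nonnegative matrix $A=[a_{ij}]$, then $\sum_{i=1}^{n}\lambda_i=\operatorname{tr}(A)=\sum_{i=1}^{n}a_{ii}\ge 0$, because the diagonal entries of a nonnegative matrix are nonnegative. Note that this half uses nothing about the signs of $\lambda_2,\dots,\lambda_n$.

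For \emph{sufficiency} I would argue constructively. First I observe that the hypotheses force $\lambda_1$ to be the Perron root: from $\sum_i\lambda_i\ge 0$ together with $\lambda_i<0$ for $i\ge 2$ one gets $\lambda_1\ge\sum_{i\ge2}|\lambda_i|>0$. The plan is then to exhibit an explicit nonnegative realizing matrix in $\mathcal{CS}_{\lambda_1}$, namely
\[
A=\left[
\begin{array}{ccccc}
\lambda_1+\lambda_2+\cdots+\lambda_n & -\lambda_2 & -\lambda_3 & \cdots & -\lambda_n\\
\lambda_1+\lambda_3+\cdots+\lambda_n & 0 & -\lambda_3 & \cdots & -\lambda_n\\
\vdots & & \ddots & & \vdots\\
\lambda_1+\lambda_n & 0 & \cdots & 0 & -\lambda_n\\
\lambda_1 & 0 & \cdots & 0 & 0
\end{array}
\right],
\]
that is, $a_{i1}=\lambda_1+\sum_{k=i+1}^{n}\lambda_k$, $a_{ij}=-\lambda_j$ for $j\ge 2$ and $i<j$, and $a_{ij}=0$ otherwise. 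By construction each row sums to $\lambda_1$, so $\mathbf{e}=(1,\dots,1)^{T}$ is an eigenvector for $\lambda_1$. The nonnegativity check is exactly where the hypothesis $\sum_i\lambda_i\ge 0$ is consumed: the entries $-\lambda_j$ above the diagonal are positive since $\lambda_j<0$, while each first-column entry satisfies $a_{i1}=\lambda_1+\sum_{k=i+1}^{n}\lambda_k\ge\lambda_1+\sum_{k=2}^{n}\lambda_k=\sum_{k=1}^{n}\lambda_k\ge 0$.

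The substantive step, which I expect to be the main obstacle, is verifying that $\det(xI-A)=\prod_{i=1}^{n}(x-\lambda_i)$. A plain companion-matrix realization is unavailable here, since the coefficients of $\prod_i(x-\lambda_i)$ need not carry the signs that a nonnegative companion matrix requires; this is precisely why the structured matrix above is needed. To evaluate the determinant I would first apply the row operations $R_i\leftarrow R_i-R_{i+1}$ for $i=1,\dots,n-1$, which leave the determinant unchanged and, by telescoping, collapse the first column into the single entries $-\lambda_{i+1}$, producing a sparse, almost bidiagonal matrix whose first row is $(x-\lambda_2)(1,-1,0,\dots,0)$. Expanding along that first row then factors out $(x-\lambda_2)$ and leaves exactly the characteristic polynomial of the analogous $(n-1)\times(n-1)$ matrix associated with the shorter list $\{\lambda_1,\lambda_3,\dots,\lambda_n\}$. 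An induction on $n$, with the trivial base case $n=1$ where $A=[\lambda_1]$, then gives $\det(xI-A)=\prod_{i=1}^{n}(x-\lambda_i)$ and finishes the proof. The only delicate point is the bookkeeping in this telescoping-and-expansion step; everything else is routine.
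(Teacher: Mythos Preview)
Your argument is correct. The necessity is the standard trace observation, and for sufficiency your explicit matrix is nonnegative under the hypotheses; the row operations $R_i\leftarrow R_i-R_{i+1}$ do telescope as you describe, and the identity $\det(M^{(1,1)})+\det(M^{(1,2)})=\det M'$ (where $M'$ is the row-reduced characteristic matrix for the shorter list $\{\lambda_1,\lambda_3,\dots,\lambda_n\}$) holds because $M^{(1,1)}$ is upper triangular with determinant $x^{n-1}$ while $M^{(1,2)}$ and $M'$ differ only in their $(1,1)$ entry by $x$, whose cofactor is $x^{n-2}$. So the induction goes through.

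However, your route is genuinely different from the paper's. The paper does not prove Theorem~\ref{Sulei} directly---it cites it as classical and remarks that a short proof via Brauer's theorem appears in \cite{Soto}---but the closely related Theorem~2.1 (which strengthens the conclusion to a \emph{permutative} nonnegative realization) is proved there, and that proof is the paper's model. Their method is: start from a lower-triangular matrix $C\in\mathcal{CS}_{\lambda_1-\alpha}$ with diagonal $\lambda_1-\alpha,\lambda_2,\dots,\lambda_n$ (so the spectrum is read off instantly), and then apply Brauer's rank-one perturbation $C\mapsto C+\mathbf{e}\mathbf{q}^{T}$ with a suitable $\mathbf{q}$ to make every entry nonnegative while leaving the spectrum untouched; a second Brauer step shifts the Perron eigenvalue back to $\lambda_1$. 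No determinant computation is needed, since Brauer's theorem guarantees the eigenvalues, and the resulting matrix is permutative. Your approach, by contrast, is entirely self-contained---it uses nothing beyond cofactor expansion---but it requires the inductive determinant calculation you outline, and the realizing matrix it produces is not permutative (its rows are not permutations of one another). So the paper's approach is shorter and yields a stronger conclusion, while yours is more elementary and avoids invoking an external perturbation lemma.
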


\noindent The following two results, which we set here for completeness,
have been shown to be very useful, not only to derive sufficient conditions
for the realizability of the \textit{NIEP, }but for constructing a realizing
matrix as well. The first result, due to Brauer \cite{Brauer}, shows how to
modify one single eigenvalue of a matrix, via a rank-one perturbation,
without changing any of its remaining eigenvalues (see \cite{Perfect, Soto,
Soto3} and the references therein to see how Brauer result has been applied
to \textit{NIEP)}. The second result, due to Rado and introduced by Perfect
in \cite{Perfect1} is an extension of Brauer result and it shows how to
change $r$ eigenvalues of an $n\times n$ matrix ($r<n),$ via a perturbation
of rank $r,$ without changing any of its remaining $n-r$ eigenvalues (see 
\cite{Perfect1, Soto1} to see how Rado result has been applied to \textit{%
NIEP}). Both results will be employed here to obtain conditions for lists of
real and complex numbers to be the spectrum of a permutative nonnegative
matrix.

\begin{theorem}
Brauer \cite{Brauer}\label{Bra} Let $A$ be an $n\times n$ arbitrary matrix
with eigenvalues $\lambda _{1},\lambda _{2},\ldots ,\lambda _{n}.$ Let $%
\mathbf{v}=(v_{1},\ldots ,v_{n})^{T}$ be an eigenvector of $A$ corresponding
to the eigenvalue $\lambda _{k}$ and let $\mathbf{q}$ be any $n-$dimensional
vector. Then the matrix $A+\mathbf{vq}^{T}$ has eigenvalues $\lambda
_{1},\ldots \lambda _{k-1},\lambda _{k}+\mathbf{v}^{T}\mathbf{q,}\lambda
_{k+1},\ldots ,\lambda _{n}.$
\end{theorem}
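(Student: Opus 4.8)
The plan is to prove the eigenvalue statement by exhibiting a similarity transformation that simultaneously reveals the block-triangular structure of $A$ and of the perturbed matrix. First I would complete the eigenvector $\mathbf{v}$ to a basis of $\mathbb{C}^{n}$ by choosing an invertible matrix $S=[\mathbf{v}\mid \mathbf{s}_{2}\mid \cdots \mid \mathbf{s}_{n}]$ whose first column is $\mathbf{v}$. Because $A\mathbf{v}=\lambda _{k}\mathbf{v}$, the first column of $S^{-1}AS$ equals $\lambda _{k}\mathbf{e}_{1}$, so $S^{-1}AS$ is block upper triangular,
\[
S^{-1}AS=\begin{pmatrix} \lambda _{k} & \mathbf{c}^{T} \\ \mathbf{0} & B \end{pmatrix},
\]
where the $(n-1)\times (n-1)$ block $B$ carries the remaining spectrum $\{\lambda _{i}:i\neq k\}$.

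Next I would track what the rank-one term becomes under the same similarity. Since $\mathbf{v}$ is the first column of $S$, we have $S^{-1}\mathbf{v}=\mathbf{e}_{1}$, hence $S^{-1}(\mathbf{v}\mathbf{q}^{T})S=\mathbf{e}_{1}(\mathbf{q}^{T}S)=\mathbf{e}_{1}\mathbf{r}^{T}$ with $\mathbf{r}^{T}=\mathbf{q}^{T}S$. This matrix has nonzero entries only in its first row, so adding it to $S^{-1}AS$ alters only that first row and leaves the block $B$ untouched. The $(1,1)$ entry becomes $\lambda _{k}+r_{1}$, and since $r_{1}=\mathbf{q}^{T}S\mathbf{e}_{1}=\mathbf{q}^{T}\mathbf{v}=\mathbf{v}^{T}\mathbf{q}$, the transformed perturbed matrix reads
\[
S^{-1}(A+\mathbf{v}\mathbf{q}^{T})S=\begin{pmatrix} \lambda _{k}+\mathbf{v}^{T}\mathbf{q} & \ast \\ \mathbf{0} & B \end{pmatrix}.
\]
Reading off the spectrum of this block-triangular matrix yields $\lambda _{k}+\mathbf{v}^{T}\mathbf{q}$ together with the eigenvalues of $B$, namely $\{\lambda _{i}:i\neq k\}$; since similarity preserves eigenvalues, this is exactly the claim.

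The only genuinely delicate point is to confirm that the coupling block $\ast$ above is irrelevant: for a block upper-triangular matrix the characteristic polynomial factors as the product of those of the diagonal blocks, regardless of the upper-right block, so the modification of the first row cannot leak into the spectrum of $B$. I would make this explicit with the determinant formula for block-triangular matrices. As an alternative that avoids any basis completion, one can compute the characteristic polynomial directly by the matrix determinant lemma: for $x$ not an eigenvalue of $A$, $\det (xI-A-\mathbf{v}\mathbf{q}^{T})=\det (xI-A)\,(1-\mathbf{q}^{T}(xI-A)^{-1}\mathbf{v})$; the eigen-relation gives $(xI-A)^{-1}\mathbf{v}=(x-\lambda _{k})^{-1}\mathbf{v}$, so the right-hand side collapses to $\big(\prod_{i\neq k}(x-\lambda _{i})\big)(x-\lambda _{k}-\mathbf{v}^{T}\mathbf{q})$, exhibiting the stated roots. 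The lone subtlety in that route is the passage from an identity valid for generic $x$ to an identity of polynomials, which follows from the identity theorem.
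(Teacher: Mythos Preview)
Your proof is correct, and both routes you sketch---the basis-completion/similarity argument and the matrix determinant lemma computation---are standard, valid derivations of Brauer's theorem. Note, however, that the paper does not actually prove this statement: it is quoted as a classical result from Brauer (1952) and used as a black box throughout, so there is no ``paper's own proof'' to compare against. Your write-up would serve well as a self-contained justification if one were desired; the only cosmetic point is that the second route already suffices on its own, so presenting both is redundant unless you intend the first as the primary argument and the second merely as a remark.
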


\begin{theorem}
Rado \cite{Perfect1}\label{Rado} Let $A$ be an $n\times n$ arbitrary matrix
with spectrum $\Lambda =\{\lambda _{1},\ldots ,\lambda _{n}\}.$ Let $X=\left[
\mathbf{x}_{1}\mid \cdots \mid \mathbf{x}_{r}\right] $ be such that $%
rank(X)=r$ and $A\mathbf{x}_{i}=\lambda _{i}\mathbf{x}_{i},$ $i=1,\ldots ,r,$
$r\leq n.$ Let $C$ be an $r\times n$ arbitrary matrix. Then $A+XC$ has
eigenvalues $\mu _{1},\ldots ,\mu _{r},\lambda _{r+1},\ldots \lambda _{n},$
where $\mu _{1},\ldots ,\mu _{r}$ are eigenvalues of the matrix $\Omega +CX$
with $\Omega =diag\{\lambda _{1},\ldots ,\lambda _{r}\}.$
\end{theorem}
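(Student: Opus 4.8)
The plan is to exhibit a single similarity transformation that simultaneously block-triangularizes both $A$ and the perturbed matrix $A+XC$, so that the spectra can be read off the diagonal blocks. First I would record the hypothesis in compact form: since $A\mathbf{x}_{i}=\lambda _{i}\mathbf{x}_{i}$ for $i=1,\ldots ,r$, column by column this says $AX=X\Omega$ with $\Omega =\mathrm{diag}\{\lambda _{1},\ldots ,\lambda _{r}\}$.

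Because $\mathrm{rank}(X)=r$, its columns are linearly independent and can be completed to a basis of $\mathbb{C}^{n}$; choose an $n\times (n-r)$ matrix $Y$ so that $S=[X\mid Y]$ is invertible. Partition $S^{-1}=\begin{bmatrix} U \\ V \end{bmatrix}$ conformally, where $U$ is $r\times n$ and $V$ is $(n-r)\times n$. The identity $S^{-1}S=I_{n}$ then yields the four block relations $UX=I_{r}$, $UY=0$, $VX=0$, $VY=I_{n-r}$, which are the workhorses of the argument.

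Next I would compute $S^{-1}AS$ block by block. Using $AX=X\Omega$ together with $UX=I_{r}$ and $VX=0$, the upper-left and lower-left blocks become $UAX=\Omega$ and $VAX=0$, so
\[
S^{-1}AS=\begin{bmatrix} \Omega & UAY \\ 0 & VAY \end{bmatrix}.
\]
This block-upper-triangular form shows that the spectrum of $VAY$ is exactly $\{\lambda _{r+1},\ldots ,\lambda _{n}\}$, the eigenvalues of $A$ not already carried by $\Omega $. For the perturbation, $S^{-1}X=\begin{bmatrix} I_{r} \\ 0 \end{bmatrix}$ and $CS=[CX\mid CY]$, whence $S^{-1}(XC)S=\begin{bmatrix} CX & CY \\ 0 & 0 \end{bmatrix}$. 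Adding the two transforms gives
\[
S^{-1}(A+XC)S=\begin{bmatrix} \Omega +CX & UAY+CY \\ 0 & VAY \end{bmatrix},
\]
again block-upper-triangular. Reading off the diagonal blocks, $A+XC$ is similar to a matrix whose eigenvalues are those of $\Omega +CX$, namely $\mu _{1},\ldots ,\mu _{r}$, together with those of $VAY$, namely $\lambda _{r+1},\ldots ,\lambda _{n}$, which is precisely the claim.

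The argument is essentially computational once the right coordinates are fixed, so there is no serious obstacle; the only point requiring care is that the same block $VAY$ appears in both similarity transforms, which is what forces the $n-r$ untouched eigenvalues to persist. I would take care to invoke the standard fact that a block-triangular matrix has spectrum equal to the union of the spectra of its diagonal blocks, and to note that the choice of the completion $Y$ is immaterial, since the final conclusion depends only on the spectra of $\Omega +CX$ and $VAY$, both of which are independent of that choice.
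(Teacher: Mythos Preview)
Your argument is correct. The key observation $AX=X\Omega$ together with the completion $S=[X\mid Y]$ does exactly what you claim: both $A$ and $A+XC$ are simultaneously brought to block upper-triangular form with the same lower-right block $VAY$, whose spectrum is forced to be $\{\lambda_{r+1},\ldots,\lambda_n\}$ by comparing with $A$. The computation $S^{-1}(XC)S=\begin{bmatrix} CX & CY\\ 0 & 0\end{bmatrix}$ is clean and the conclusion follows at once.

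As for comparison with the paper: there is nothing to compare. The paper does not prove this theorem; it merely states it and attributes it to Rado via Perfect~\cite{Perfect1}, using it as a black box in later sections. Your similarity-transformation proof is in fact the standard one and is essentially how the result is established in the literature, so you have supplied what the paper omits.

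One minor remark on your closing comment: the phrase ``both of which are independent of that choice'' is slightly loose. The matrix $VAY$ itself certainly depends on $Y$; what is independent of $Y$ is its \emph{spectrum}, and that independence is not self-evident but is deduced from the similarity $S^{-1}AS$ already computed. You clearly understand this, but it would read more precisely if phrased as ``the spectrum of $VAY$ is determined by the similarity with $A$ and hence does not depend on the choice of $Y$.''
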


\noindent A simple proof of Theorem \ref{Sulei} was given in \cite{Soto} by
applying Brauer result. The following result in \cite{Soto2}, is a symmetric
version of the Rado result, which we shall use to obtain some of the results
in this paper:

\begin{theorem}
\cite{Soto2}\label{So2} Let $A$ be an $n\times n$ real symmetric matrix with
spectrum $\Lambda =\{\lambda _{1},\lambda _{2},\ldots ,\lambda _{n}\},$ and
for some $r\leq n,$ let $\{\mathbf{x}_{1},\mathbf{x}_{2},\ldots ,\mathbf{x}%
_{r}\}$ be an orthonormal set of eigenvectors of $A$ spanning the invariant
subspace associated with $\lambda _{1},\lambda _{2},\ldots ,\lambda _{r}.$
Let $X$ be the $n\times r$ matrix with $i-th$ column $\mathbf{x}_{i},$ let $%
\Omega =diag\{\lambda _{1},\ldots ,\lambda _{r}\},$ and let $C$ be any $%
r\times r$ symmetric matrix. Then the symmetric matrix $A+XCX^{T}$ has
eigenvalues $\mu _{1},\ldots ,\mu _{r},\lambda _{r+1},\ldots ,\lambda _{n},$
where $\mu _{1},\ldots ,\mu _{r}$ are eigenvalues of the matrix $\Omega +C.$
\end{theorem}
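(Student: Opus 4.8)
The plan is to exploit the spectral theorem for the real symmetric matrix $A$ and reduce the perturbed matrix $A+XCX^{T}$ to block-diagonal form by an orthogonal similarity. Because $A$ is symmetric, the orthonormal set $\{\mathbf{x}_{1},\ldots ,\mathbf{x}_{r}\}$ can be completed to an orthonormal basis of eigenvectors $\{\mathbf{x}_{1},\ldots ,\mathbf{x}_{r},\mathbf{x}_{r+1},\ldots ,\mathbf{x}_{n}\}$ of $A$, with $A\mathbf{x}_{i}=\lambda _{i}\mathbf{x}_{i}$ for every $i$. Setting $Y=\left[ \mathbf{x}_{r+1}\mid \cdots \mid \mathbf{x}_{n}\right] $ and $Q=\left[ X\mid Y\right] $, the matrix $Q$ is orthogonal, i.e. $Q^{T}Q=I_{n}$, and $Q^{T}AQ=diag\{\Omega ,\Lambda ^{\prime }\}$, where $\Lambda ^{\prime }=diag\{\lambda _{r+1},\ldots ,\lambda _{n}\}$.

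First I would compute the conjugate of the perturbation. Orthonormality yields $X^{T}X=I_{r}$ and $Y^{T}X=0$, so
\begin{equation*}
Q^{T}X=\left[
\begin{array}{c}
X^{T}X \\
Y^{T}X
\end{array}
\right] =\left[
\begin{array}{c}
I_{r} \\
0
\end{array}
\right] ,\qquad X^{T}Q=\left[ I_{r}\mid 0\right] .
\end{equation*}
Consequently the rank-$r$ perturbation conjugates to $Q^{T}(XCX^{T})Q=(Q^{T}X)C(X^{T}Q)=diag\{C,0\}$, a matrix that vanishes outside its leading $r\times r$ block. Combining the two computations gives
\begin{equation*}
Q^{T}(A+XCX^{T})Q=\left[
\begin{array}{cc}
\Omega +C & 0 \\
0 & \Lambda ^{\prime }
\end{array}
\right] .
\end{equation*}
Since an orthogonal similarity preserves the spectrum, the eigenvalues of $A+XCX^{T}$ are exactly those of the block-diagonal matrix on the right, namely the eigenvalues $\mu _{1},\ldots ,\mu _{r}$ of $\Omega +C$ together with $\lambda _{r+1},\ldots ,\lambda _{n}$, as claimed. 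Symmetry of $A+XCX^{T}$ is immediate because $A$ and $C$ are symmetric.

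The step requiring the most care is the completion of $\{\mathbf{x}_{1},\ldots ,\mathbf{x}_{r}\}$ to an orthonormal eigenbasis of $A$: this is precisely where the symmetry hypothesis is essential, since it guarantees that the orthogonal complement of $\,span\{\mathbf{x}_{1},\ldots ,\mathbf{x}_{r}\}$ is itself $A$-invariant and decomposes into eigenspaces for the remaining eigenvalues $\lambda _{r+1},\ldots ,\lambda _{n}$. Once the orthogonal $Q$ is in hand, the remainder is routine linear algebra. Alternatively, one may deduce the statement directly from Theorem \ref{Rado} by applying it with the $r\times n$ matrix $CX^{T}$ in place of its $C$, since then $\Omega +(CX^{T})X=\Omega +CX^{T}X=\Omega +C$; the orthogonal-conjugation argument above is preferable, however, as it simultaneously exhibits that the realizing matrix remains symmetric.
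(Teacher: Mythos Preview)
Your argument is correct. The completion to an orthonormal eigenbasis via the spectral theorem, the computation $Q^{T}X=\left[\begin{smallmatrix}I_{r}\\0\end{smallmatrix}\right]$, and the resulting block-diagonal form $Q^{T}(A+XCX^{T})Q=diag\{\Omega+C,\Lambda'\}$ are all sound, and the symmetry of $A+XCX^{T}$ is indeed immediate.

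Note, however, that the paper does \emph{not} supply its own proof of this statement: Theorem~\ref{So2} is quoted from \cite{Soto2} as background, alongside the Brauer and Rado theorems, and is used without proof. So there is no ``paper's approach'' to compare against here. Your orthogonal-conjugation proof is the natural one and is essentially the argument one finds in \cite{Soto2}. The alternative you sketch at the end---deducing the eigenvalue statement from Theorem~\ref{Rado} with $CX^{T}$ in the role of Rado's $C$---is also valid and is closer in spirit to how the paper packages these perturbation results, though as you observe it does not by itself display the symmetry of the realizing matrix.
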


\noindent In this paper we give very simple and short proofs to show that
both, a list of real numbers of Suleimanova type and a list of complex
numbers of Suleimanova type, that is, $\func{Re}\lambda _{k}\leq 0,$ $%
\left\vert \func{Re}\lambda _{k}\right\vert \geq \left\vert \func{Im}\lambda
_{k}\right\vert ,$ $k=2,\ldots ,n,$ are realizable by a permutative
nonnegative matrix. We use theorems \ref{Rado} and \ref{So2} to obtain
sufficient conditions for more general lists to be the spectrum of a
permutative nonnegative matrix and the spectrum of a symmetric permutative
nonnegative matrix. Our results generate an algorithmic procedure to compute
a realizing matrix. The paper is organized as follows: In Section $2$ we
show that a list of real numbers of Suleimanova type is always the spectrum
of a permutative nonnegative matrix, and we give sufficient conditions for
the problem to have a solution in the case of more general lists of real
numbers. We also explore on the existence and construction of symmetric
permutative nonnegative matrices with prescribed spectrum. We show that the
question in \cite{Paparella} has a negative answer, that is, there are
realizable lists of real numbers which are not the spectrum of a permutative
nonnegative matrix. In section $3$ we consider the case of realizable lists
of complex numbers $\Lambda =\{\lambda _{1},\ldots ,\lambda _{n}\}$ of
Suleimanova type, with the condition $\lambda _{n-j+2}=\overline{\lambda _{j}%
},$ \ $j=2,3,\ldots ,\left[ \frac{n+1}{2}\right] ,$ and we show that they
are also realizable by permutative nonnegative matrices. We also give some
examples to illustrate the results.

\section{Permutative matrices with prescribed real spectrum}

\noindent In this section we give a short simple proof of Theorem $3.3$ in 
\cite{Paparella}, and we prove sufficient conditions for the existence of a
(symmetric) permutative nonnegative matrix with prescribed real spectrum. We
also give a response to the question in \cite{Paparella}.

\begin{theorem}
Let $\Lambda =\{\lambda _{1},\lambda _{2},\ldots ,\lambda _{n}\}$ be a a
list of real numbers with $\lambda _{1}>0,$ $\lambda _{i}<0,$ \ $i=2,\ldots
,n.$ Then $\Lambda $ is the spectrum of an $n\times n$ permutative
nonnegative matrix if and only if $\dsum\limits_{i=1}^{n}\lambda _{i}\geq 0.$
\end{theorem}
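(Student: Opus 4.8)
The plan is to prove both directions of the equivalence. The necessity of $\sum_{i=1}^{n}\lambda_i \geq 0$ is immediate: any permutative nonnegative matrix is nonnegative, and the trace, being the sum of diagonal entries of a nonnegative matrix, equals $\sum_{i=1}^{n}\lambda_i$, which must therefore be nonnegative. (Alternatively, this follows directly from Theorem \ref{Sulei}, since a permutative realizing matrix is in particular a nonnegative realizing matrix.) So the real content is the sufficiency: assuming $\sum_{i=1}^{n}\lambda_i \geq 0$, I must construct a permutative nonnegative matrix with spectrum $\Lambda$.

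For the sufficiency, the natural approach is to use the Rado extension (Theorem \ref{Rado}) to transplant the eigenvalues onto a simple, explicitly permutative scaffold. The plan is to start from a matrix $A$ whose spectrum is $\{s, \lambda_2, \ldots, \lambda_n\}$ for a conveniently chosen Perron value $s$ (for instance $s = -\sum_{i=2}^{n}\lambda_i$, so that the row sum is controlled), where $A$ is already permutative and nonnegative in a trivial way, and whose eigenvector associated with the Perron eigenvalue is $\mathbf{e}=(1,\ldots,1)^T$. A concrete candidate is a diagonal-plus-constant construction: take $A$ to have the negative eigenvalues $\lambda_2,\ldots,\lambda_n$ placed so that $A \in \mathcal{CS}_s$. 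One then applies Theorem \ref{Rado} with $r=1$ (which is just the Brauer result of Theorem \ref{Bra}): perturb $A$ by a rank-one update $\mathbf{e}\,\mathbf{q}^T$ to move the Perron eigenvalue from $s$ to $\lambda_1$, choosing $\mathbf{q}$ so that $\mathbf{e}^T\mathbf{q}=\lambda_1-s$ and so that the resulting matrix stays both nonnegative and permutative.

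The key is to choose the scaffold $A$ and the perturbation vector $\mathbf{q}$ so that permutativity is preserved. A clean way to guarantee permutativity is to make every row of the final matrix a permutation of a single vector $\mathbf{x}$. I would look for $A$ of a circulant or near-circulant form whose rows are cyclic shifts of one another, since cyclic shifts are automatically permutations of the first row, so the circulant structure delivers permutativity for free. For a circulant matrix the eigenvectors are the Fourier vectors and $\mathbf{e}$ is the Perron eigenvector, so I would try to design the first-row vector $\mathbf{x}=(x_1,\ldots,x_n)$ so that the circulant $C(\mathbf{x})$ has the prescribed real spectrum $\Lambda$ while keeping $x_i \geq 0$. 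The nonnegativity constraints on $\mathbf{x}$, together with the eigenvalue equations $\lambda_k = \sum_{j} x_j \omega^{(j-1)(k-1)}$ for roots of unity $\omega$, are where the hypotheses $\lambda_i<0$ and $\sum\lambda_i\geq 0$ must be used: the condition $\sum_{i}\lambda_i = n\,x_1 \geq 0$ and the remaining equations should be solvable with all $x_j\geq 0$ precisely under the Suleimanova-type sign pattern.

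The main obstacle I anticipate is reconciling nonnegativity with permutativity simultaneously. A pure circulant may be too rigid to realize an arbitrary list of distinct negative reals with nonnegative entries, so the fallback plan is to use the full strength of Theorem \ref{Rado} with $r>1$: build a block scaffold whose invariant subspace associated with $\lambda_2,\ldots,\lambda_n$ can be spanned by eigenvectors that are themselves permutations of one another, then apply the rank-$r$ update $XC$ to inject the exact spectrum while the low-rank structure preserves the row-permutation pattern. Verifying that the constructed matrix $A+XC$ is simultaneously entrywise nonnegative and has each row a permutation of the first — rather than merely being in $\mathcal{CS}_{\lambda_1}$ — will be the delicate bookkeeping step, and this is where the hypothesis $\sum_{i=1}^{n}\lambda_i\geq 0$ enters to force nonnegativity of the entries.
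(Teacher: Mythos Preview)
Your necessity argument is fine, and your instinct in the second paragraph is exactly the paper's approach: start from a matrix in $\mathcal{CS}_{s}$ with spectrum $\{s,\lambda_2,\ldots,\lambda_n\}$ (the paper takes $s=\lambda_1-\alpha$ with $\alpha=\sum_i\lambda_i$), then apply Brauer with $\mathbf{e}\mathbf{q}^T$ to shift the Perron eigenvalue. However, you never \emph{commit} to a specific scaffold and a specific $\mathbf{q}$, and you never verify that the result is permutative --- you merely list desiderata. That is the gap. The paper's concrete choice is the lower-triangular matrix
\[
C=\begin{bmatrix}
\lambda_1-\alpha & 0 & \cdots & 0\\
\lambda_1-\alpha-\lambda_2 & \lambda_2 & & \\
\vdots & & \ddots & \\
\lambda_1-\alpha-\lambda_n & & & \lambda_n
\end{bmatrix},\qquad
\mathbf{q}^T=[\alpha-\lambda_1,\,-\lambda_2,\ldots,-\lambda_n],
\]
and then one checks by hand that row $i$ of $C+\mathbf{e}\mathbf{q}^T$ is obtained from row $1$ by swapping positions $1$ and $i$, so the matrix is permutative; nonnegativity follows from $\lambda_i<0$. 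A final shift by $\frac{\alpha}{n}\mathbf{e}\mathbf{e}^T$ moves the Perron eigenvalue to $\lambda_1$. None of this is hard, but it is the actual content, and your proposal stops short of it.

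Your circulant detour is a dead end for this theorem: a real circulant has eigenvalues satisfying $\lambda_{n-j+2}=\overline{\lambda_j}$, which for a real spectrum forces pairwise equalities among the $\lambda_j$'s; an arbitrary Suleimanova list (e.g.\ $\{3,-1,-2\}$) cannot be the spectrum of a real $3\times 3$ circulant. The Rado/block fallback is unnecessary here and, as stated, is too vague to constitute a proof. Drop those two paragraphs and simply execute the Brauer construction explicitly.
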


\begin{proof}
The need is clear. Suppose that $\alpha =\dsum\limits_{i=1}^{n}\lambda
_{i}\geq 0.$ Then we take the list $\Lambda _{\alpha }=\{\lambda _{1}-\alpha
,\lambda _{2},\ldots ,\lambda _{n}\}$\ and consider the initial matrix%
\begin{equation*}
C=\left[ 
\begin{array}{ccccc}
\lambda _{1}-\alpha & 0 & \ddots & \cdots & 0 \\ 
\lambda _{1}-\alpha -\lambda _{2} & \lambda _{2} & \ddots & \ddots & \vdots
\\ 
\lambda _{1}-\alpha -\lambda _{3} & \ddots & \lambda _{3} & \ddots & \vdots
\\ 
\vdots & \ddots & \ddots & \ddots & 0 \\ 
\lambda _{1}-\alpha -\lambda _{n} & 0 & \cdots & 0 & \lambda _{n}%
\end{array}%
\right] \in \mathcal{CS}_{\lambda _{1}-\alpha }.
\end{equation*}%
From the Brauer result, for $\mathbf{q}=\left[ \alpha -\lambda _{1},-\lambda
_{2},-\lambda _{3},\ldots ,-\lambda _{n}\right] ,$ we have that $B=C+\mathbf{%
eq}^{T}$ is a permutative nonnegative matrix with spectrum $\Lambda _{\alpha
},$ and $B\in \mathcal{CS}_{\lambda _{1}-\alpha }.$ Now, $A=B+\mathbf{er}%
^{T},$ where $\mathbf{r}=\left[ \frac{\alpha }{n},\frac{\alpha }{n},\ldots ,%
\frac{\alpha }{n}\right] ,$ is the desired permutative nonnegative matrix
with spectrum $\Lambda $ (of course we may also take $A=C+\mathbf{e(q}^{T}+%
\mathbf{r}^{T})$).
\end{proof}

\bigskip

\noindent Now we give sufficient conditions for more general lists of real
numbers:

\begin{lemma}
\label{Mlanda}The matrix 
\begin{equation}
A=\left[ 
\begin{array}{ccccc}
a_{11} & a_{12} & a_{13} & \cdots & a_{1n} \\ 
a_{11}-\lambda _{2} & a_{12}+\lambda _{2} & a_{13} & \cdots & a_{1n} \\ 
a_{11}-\lambda _{3} & a_{12} & a_{13}+\lambda _{3} & \cdots & a_{1n} \\ 
\vdots & \vdots & a_{13} & \ddots & \vdots \\ 
a_{11}-\lambda _{n} & a_{12} & \cdots & \cdots & a_{1n}+\lambda _{n}%
\end{array}%
\right]  \label{Ma}
\end{equation}%
has eigenvalues $\lambda _{1}=\dsum\limits_{j=1}^{n}a_{1j},\lambda
_{2},\ldots ,\lambda _{n}.$
\end{lemma}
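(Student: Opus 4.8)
The plan is to exhibit $A$ as a rank-one Brauer perturbation of a triangular matrix whose spectrum is transparent, and then invoke Theorem \ref{Bra}. Writing $\mathbf{a}=(a_{11},a_{12},\ldots ,a_{1n})^{T}$ for the first row of $A$, I first observe that each subsequent row differs from the first only by adding $\lambda _{k}$ in the $k$-th entry and subtracting $\lambda _{k}$ in the first entry; that is, row $k$ equals $\mathbf{a}^{T}+\lambda _{k}(\mathbf{e}_{k}-\mathbf{e}_{1})^{T}$ for $k=2,\ldots ,n$. Consequently I can decompose
\begin{equation*}
A=E+\mathbf{e}\,\mathbf{a}^{T},
\end{equation*}
where $E$ has first row zero and, for $i\geq 2$, has $E_{ii}=\lambda _{i}$, $E_{i1}=-\lambda _{i}$, and all other entries zero.

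The point of this decomposition is that $E$ is lower triangular: its only nonzero off-diagonal entries sit in the first column, below the diagonal. Hence its eigenvalues are exactly its diagonal entries, namely $0,\lambda _{2},\lambda _{3},\ldots ,\lambda _{n}$. Moreover, since the rows of $E$ sum to zero (the first row is zero, and row $i\geq 2$ contributes $-\lambda _{i}+\lambda _{i}=0$), the all-ones vector satisfies $E\mathbf{e}=\mathbf{0}$, so $\mathbf{e}$ is an eigenvector of $E$ associated with the eigenvalue $0$.

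Now I apply the Brauer result (Theorem \ref{Bra}) to $E$ with the eigenpair $(0,\mathbf{e})$ and the perturbation vector $\mathbf{q}=\mathbf{a}$. Brauer's theorem then says that $E+\mathbf{e}\,\mathbf{a}^{T}=A$ has the same spectrum as $E$, except that the eigenvalue $0$ is shifted to $0+\mathbf{e}^{T}\mathbf{a}=\sum_{j=1}^{n}a_{1j}$, while $\lambda _{2},\ldots ,\lambda _{n}$ are left untouched. This yields precisely the claimed spectrum $\lambda _{1}=\sum_{j=1}^{n}a_{1j},\lambda _{2},\ldots ,\lambda _{n}$.

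The main obstacle here, such as it is, is purely a matter of bookkeeping: getting the decomposition $A=E+\mathbf{e}\,\mathbf{a}^{T}$ right and verifying that $E$ is triangular with the stated diagonal. One could instead expand $\det (A-\lambda I)$ directly after performing the column operation ``add every column to the first,'' which makes the first column constant and exposes the factor $\lambda _{1}-\lambda $, with the remaining minor reducing to $\prod_{k=2}^{n}(\lambda _{k}-\lambda )$. The Brauer route is shorter, however, and matches the constructive spirit of the paper.
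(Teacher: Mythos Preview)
Your argument is correct, but it takes a different (and more elaborate) route than the paper. The paper's proof is two sentences: it notes that $A$ has constant row sums equal to $\sum_{j=1}^{n}a_{1j}$, which gives the Perron eigenvalue $\lambda_{1}$, and then simply asserts that $\det(\lambda I-A)=0$ for $\lambda=\lambda_{i}$, $i\geq 2$. The unstated reason behind that assertion is that when $\lambda=\lambda_{i}$, rows $1$ and $i$ of $A-\lambda_{i}I$ are identical, so the determinant vanishes. Your Brauer decomposition $A=E+\mathbf{e}\,\mathbf{a}^{T}$ with $E$ lower triangular is a perfectly valid alternative; it is more explicit, makes the multiplicities transparent, and fits the constructive spirit of the surrounding sections, at the cost of being longer than what the paper actually writes. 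The column-operation variant you sketch at the end is closer in flavor to the paper's direct determinant check.
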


\begin{proof}
Since $A$ has constant row sums equal to $\dsum\limits_{j=1}^{n}a_{1j},$
then $\lambda _{1}=\dsum\limits_{j=1}^{n}a_{1j}.$ Moreover, it is clear that 
$\det (\lambda I-A)=0$ for $\lambda =\lambda _{i},$ $i=2,\ldots ,n.$
\end{proof}

\begin{theorem}
Let $\Lambda =\{\lambda _{1},\lambda _{2},\ldots ,\lambda _{n}\}$ be a list
of real numbers and let $a_{11},a_{12},\ldots ,a_{1n}$ be real nonnegative
numbers. If%
\begin{equation*}
a_{11}=\frac{1}{n}\dsum\limits_{k=1}^{n}\lambda _{k},\text{ \ }%
a_{11}-\lambda _{k}\geq 0,\text{ \ }k=2,\ldots ,n,
\end{equation*}%
then the matrix $A$ in (\ref{Ma}) is permutative nonnegative. If $%
\dsum\limits_{k=1}^{n}\lambda _{k}>0$ and $a_{11}-\lambda _{k}>0,$ then $A$
in (\ref{Ma}) becomes permutative positive.
\end{theorem}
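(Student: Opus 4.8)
The plan is to write the first row of $A$ explicitly and then identify each lower row as a single transposition of it. The first thing I would observe is that permutativity forces the off-diagonal first-row entries. Indeed, the $k$-th row of (\ref{Ma}) agrees with the first row $(a_{11},a_{12},\ldots,a_{1n})$ except in positions $1$ and $k$, so it is a rearrangement of the first row exactly when the multisets $\{a_{11}-\lambda_k,\, a_{1k}+\lambda_k\}$ and $\{a_{11},\, a_{1k}\}$ coincide; for $\lambda_k\neq 0$ this forces $a_{1k}=a_{11}-\lambda_k$ (and when $\lambda_k=0$ the $k$-th row simply equals the first). I would therefore take $a_{1k}=a_{11}-\lambda_k$ for $k=2,\ldots,n$, so that the hypothesis $a_{11}-\lambda_k\geq 0$ is precisely the assertion that these entries are nonnegative.

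With this choice the $k$-th row becomes $(a_{11}-\lambda_k,\,a_{12},\ldots,a_{1,k-1},\,a_{11},\,a_{1,k+1},\ldots,a_{1n})$, which is the first row with the entries in positions $1$ and $k$ interchanged; hence the $k$-th row is $(P_k\mathbf{x})^T$ for the transposition $P_k=(1\,k)$, and $A$ is permutative. For nonnegativity I would sort the entries into three types: the diagonal entries $a_{1k}+\lambda_k=a_{11}$, the first-column entries $a_{11}-\lambda_k$, and the remaining entries $a_{1j}=a_{11}-\lambda_j$; each is nonnegative by the hypotheses $a_{11}\geq 0$ and $a_{11}-\lambda_k\geq 0$. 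The spectrum is then immediate from Lemma \ref{Mlanda}: since $a_{11}=\frac1n\sum_{k=1}^n\lambda_k$ gives $\sum_{j=1}^n a_{1j}=na_{11}-\sum_{k=2}^n\lambda_k=\lambda_1$, the eigenvalues of $A$ are exactly $\lambda_1,\lambda_2,\ldots,\lambda_n$. For the positive case I would merely promote the weak inequalities to strict ones: $\sum_k\lambda_k>0$ forces $a_{11}>0$, and $a_{11}-\lambda_k>0$ makes the off-diagonal entries positive, so every entry is strictly positive.

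The real content — and the only step needing care — is this permutativity identification: recognizing that the matrix in (\ref{Ma}) is permutative precisely because the hypotheses pin down $a_{1k}=a_{11}-\lambda_k$, collapsing each lower row to a single transposition $(1\,k)$ of the first. Everything else — nonnegativity, positivity, and the eigenvalue count via Lemma \ref{Mlanda} — is a routine verification once that identification is in place.
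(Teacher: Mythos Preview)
Your proof is correct and follows essentially the same route as the paper: both set $a_{1k}=a_{11}-\lambda_k$ for $k=2,\ldots,n$, check that $\sum_{j}a_{1j}=na_{11}-\sum_{k\geq 2}\lambda_k=\lambda_1$, and then read off permutativity, nonnegativity, and the spectrum from Lemma~\ref{Mlanda}. Your additional remark that this choice is actually forced by permutativity (via the multiset identity $\{a_{11}-\lambda_k,\,a_{1k}+\lambda_k\}=\{a_{11},\,a_{1k}\}$) is a nice clarification the paper omits, but the underlying argument is the same.
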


\begin{proof}
It is enough to take $a_{1k}=a_{11}-\lambda _{k},$ $k=2,3,\ldots ,n.$ Then $%
\dsum\limits_{k=1}^{n}a_{1k}=na_{11}-\dsum\limits_{k=2}^{n}\lambda
_{k}=\lambda _{1}$ and $a_{11}=\frac{1}{n}\dsum\limits_{k=1}^{n}\lambda
_{k}. $ Thus the $k^{th}$ row of $A,$ $k=2,\ldots ,n,$ is a permutation of
the first row and $A$ is an $n\times n$ permutative nonnegative matrix with
spectrum $\Lambda .$ It is clear that if $\dsum\limits_{k=1}^{n}\lambda
_{k}>0$ and $a_{11}-\lambda _{k}>0$ then $A$ is positive.
\end{proof}

\begin{corollary}
Let $\Lambda =\{\lambda _{1},\lambda _{2},\ldots ,\lambda _{n}\}$ be a list
of real numbers with $\lambda _{k}<0,$ $k=2,\ldots ,n,$ and $%
\dsum\limits_{k=1}^{n}\lambda _{k}\geq 0.$ Then $\Lambda $ is the spectrum
of a permutative nonnegative (positive) matrix.
\end{corollary}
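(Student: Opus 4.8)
The plan is to read the corollary off as a direct specialization of the theorem immediately preceding it. Given the list $\Lambda$, I would set
\[
a_{11} = \frac{1}{n}\sum_{k=1}^{n}\lambda_k,
\]
and then define $a_{1k} = a_{11} - \lambda_k$ for $k = 2,\ldots,n$, exactly as in the proof of that theorem. The whole task then reduces to checking that the two sign hypotheses of the theorem, namely $a_{11}\geq 0$ and $a_{11}-\lambda_k\geq 0$, are forced by the present assumptions.

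First I would verify $a_{11}\geq 0$: this is immediate from $\sum_{k=1}^{n}\lambda_k\geq 0$. Next I would verify $a_{11}-\lambda_k\geq 0$ for each $k\in\{2,\ldots,n\}$. Since $\lambda_k<0$ we have $-\lambda_k>0$, so $a_{11}-\lambda_k = a_{11}+(-\lambda_k)>0$, and in particular the weaker inequality $\geq 0$ holds. With both hypotheses confirmed, the theorem applies and yields the matrix $A$ of (\ref{Ma}), which is permutative and nonnegative, and which has spectrum $\Lambda$ by Lemma \ref{Mlanda}.

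For the positive case I would merely sharpen the first inequality: if $\sum_{k=1}^{n}\lambda_k>0$ then $a_{11}>0$, while the differences $a_{11}-\lambda_k$ are already strictly positive, so the positivity clause of the theorem delivers a permutative positive realizing matrix.

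There is no genuine obstacle here, since every entry of $A$ equals either $a_{11}$ or one of the differences $a_{11}-\lambda_k$; the only point deserving attention is that the hypothesis $\sum_{k=1}^{n}\lambda_k\geq 0$ is exactly what keeps $a_{11}$ (the arithmetic mean of the eigenvalues) nonnegative, so that the first column and the reused first-row entries remain nonnegative. In short, the corollary is just the case of the theorem in which the negativity of $\lambda_2,\ldots,\lambda_n$ makes the second hypothesis automatic.
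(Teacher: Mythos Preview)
Your proposal is correct and follows essentially the same route as the paper: you set $a_{11}=\frac{1}{n}\sum_{k=1}^{n}\lambda_k$ and $a_{1k}=a_{11}-\lambda_k$, check that the hypotheses of the preceding theorem hold, and conclude. The paper's own proof is terser but identical in substance, and your added verification of the sign conditions only makes explicit what the paper leaves implicit.
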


\begin{proof}
It is enough to take $a_{1k}=a_{11}-\lambda _{k},$ with $a_{11}=\frac{1}{n}%
\dsum\limits_{k=1}^{n}\lambda _{k},$ $k=2,3,\ldots ,n.$ Then $A$ in (\ref{Ma}%
) is permutative nonnegative. If $\dsum\limits_{k=1}^{n}\lambda _{k}>0$ then 
$A$ becomes positive.
\end{proof}

\begin{theorem}
\label{So3}Let $\Lambda =\{\lambda _{1},\lambda _{2},\ldots ,\lambda _{n}\}$
be a list of real numbers. Suppose that: \newline
$i)$ \ There exist a partition $\Lambda =\Lambda _{0}\cup \underset{r\text{
times}}{\underbrace{\Lambda _{1}\cup \cdots \cup \Lambda _{1}}},$ where%
\begin{equation*}
\Lambda _{0}=\{\lambda _{01},\lambda _{02},\ldots ,\lambda _{0r}\},\text{ \ }%
\Lambda _{1}=\{\lambda _{11},\lambda _{12},\ldots ,\lambda _{1p}\},\text{ }
\end{equation*}%
such that $\Gamma _{1}=\{\lambda ,\lambda _{11},\lambda _{12},\ldots
,\lambda _{1p}\},$ $0\leq \lambda \leq \lambda _{1},$ is the spectrum of a $%
(p+1)\times (p+1)$ permutative nonnegative matrix.\newline
$ii)$ There exists an $r\times r$ permutative nonnegative matrix with
spectrum $\Lambda _{0}$ and diagonal entries $\lambda ,\lambda ,\ldots
,\lambda $ $(r$ times$).$\newline
Then, there exists a permutative nonnegative matrix $P$ with spectrum $%
\Lambda .$
\end{theorem}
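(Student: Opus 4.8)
The structure of the hypotheses strongly suggests that this is an application of the Rado theorem (Theorem~\ref{Rado}), in the same spirit as the Rado-based constructions referenced in the introduction. The plan is to build a block-diagonal matrix from the $r$ copies of the realizing matrix for $\Gamma_1$, and then apply a rank-$r$ perturbation $XC$ whose role is to replace the $r$ Perron-like eigenvalues $\lambda$ of the blocks with the eigenvalues coming from $\Lambda_0$.

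Let me think about how I would set this up. Let me call the $(p+1)\times(p+1)$ permutative nonnegative matrix from hypothesis $i)$ by the name $B$; it has spectrum $\Gamma_1=\{\lambda,\lambda_{11},\ldots,\lambda_{1p}\}$ and, being permutative, it lies in $\mathcal{CS}_\lambda$ with Perron eigenvalue $\lambda$ and corresponding eigenvector $\mathbf{e}_{p+1}=(1,\ldots,1)^T$. I would take
\begin{equation*}
A=\bigoplus_{i=1}^{r}B=\mathrm{diag}\{B,B,\ldots,B\},
\end{equation*}
an $n\times n$ nonnegative matrix whose spectrum is $\Lambda_0$-free, namely $\lambda$ repeated $r$ times together with $r$ copies of $\Lambda_1$. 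The columns $\mathbf{x}_i$ needed for Rado are the Perron eigenvectors of the blocks: set $\mathbf{x}_i$ to be $\mathbf{e}_{p+1}$ placed in the $i$-th block and zero elsewhere, so $A\mathbf{x}_i=\lambda\mathbf{x}_i$ and $X=[\mathbf{x}_1\mid\cdots\mid\mathbf{x}_r]$ has rank $r$. Then $\Omega=\lambda I_r$, and Rado tells me that $A+XC$ has eigenvalues equal to the eigenvalues of $\Omega+CX=\lambda I_r+CX$ together with the untouched $r$ copies of $\Lambda_1$. Since $CX$ picks out the rows of $C$ (because each $\mathbf{x}_i$ has a single $1$ in the appropriate slot), I would choose $C$ so that $\lambda I_r+CX$ equals exactly the $r\times r$ permutative matrix with spectrum $\Lambda_0$ guaranteed by hypothesis $ii)$ — which is where the requirement that this matrix have all diagonal entries equal to $\lambda$ becomes essential, since $\Omega+CX=\lambda I_r+CX$ already contributes $\lambda$ on the diagonal.

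The main obstacle, and the real content of the theorem, is to verify that the resulting matrix $P=A+XC$ is simultaneously (a) nonnegative and (b) permutative. For nonnegativity I expect $C$ to be read off from the off-diagonal entries of the $r\times r$ matrix of hypothesis $ii)$ and spread across the appropriate block columns; because $A\geq 0$ and the perturbation only adds the (nonnegative) off-diagonal permutative entries in the correct positions, $P$ stays nonnegative. The delicate part is establishing that every row of the full $n\times n$ matrix $P$ is a permutation of its first row. This should follow from the fact that each block $B$ is permutative and the off-block perturbation inserts, into each block-row, the same multiset of entries dictated by the permutative structure of the $r\times r$ matrix; one must check that stacking these correctly preserves the global permutative property, i.e. that the entries contributed across blocks in any given row are a rearrangement of those in the first row. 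I would make this precise by writing out the block form of $P$ explicitly and exhibiting, for each row, the permutation of the first row that produces it, using that both $B$ and the $r\times r$ matrix from $ii)$ are permutative.
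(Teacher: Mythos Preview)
Your approach is correct and coincides with the paper's: form $A=\mathrm{diag}(P_1,\ldots,P_1)$, take for $X$ the block-diagonal matrix of Perron eigenvectors of the blocks, and apply Rado's theorem with a rank-$r$ perturbation built from $B-\lambda I_r$ (where $B$ is the $r\times r$ permutative matrix of hypothesis~$ii)$), obtaining a matrix $P$ with diagonal blocks $P_1$ and constant off-diagonal blocks $\tfrac{b_{ij}}{p+1}J_{p+1}$, which is permutative exactly because both $P_1$ and $B$ are and every diagonal entry of $B$ equals $\lambda$. Two small corrections: your parenthetical ``each $\mathbf{x}_i$ has a single $1$'' is a slip (each $\mathbf{x}_i$ has $p+1$ ones, so $CX$ computes block-column sums of $C$, not selected rows), and the paper streamlines the bookkeeping by normalizing $\mathbf{x}=\tfrac{1}{\sqrt{p+1}}\mathbf{e}$ and writing the perturbation in the form $XCX^{T}$ with the $r\times r$ matrix $C=B-\lambda I_r$, so that $X^{T}X=I_r$ gives $\Omega+CX^{T}X=B$ and the block structure of $P$ is immediate.
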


\begin{proof}
From $i)$ let $P_{1}$ be a $(p+1)\times (p+1)$ permutative nonnegative
matrix with spectrum $\Gamma _{1}$ and let 
\begin{equation*}
A=\left[ \ 
\begin{array}{cccc}
P_{1} &  &  &  \\ 
& P_{1} &  &  \\ 
&  & \ddots &  \\ 
&  &  & P_{1}\ 
\end{array}%
\right] ,
\end{equation*}%
with $r$ blocks $P_{1}.$ Then $P_{1}\mathbf{x}=\lambda \mathbf{x}$ with $%
\mathbf{x}=\frac{1}{\sqrt{p+1}}\mathbf{e}$ (that is $\left\Vert \mathbf{x}%
\right\Vert =1$), where $\lambda $ and $\mathbf{x}$ are the Perron
eigenvalue and the Perron eigenvector of $P_{1}$, respectively.\newline
From $ii)$ let $B$ be the $r\times r$ permutative nonnegative matrix with
spectrum $\Lambda _{0}$ and diagonal entries $\lambda ,\lambda ,\ldots
,\lambda $ $(r$ times$).$ Let $\Omega $ be the $r\times r$ diagonal matrix $%
\Omega =diag\{\lambda ,\lambda ,\ldots ,\lambda \}.$ Then for%
\begin{equation*}
C=B-\Omega ,\text{ \ }X=\left[ \ 
\begin{array}{cccc}
\mathbf{x} & 0 & \cdots & 0 \\ 
0 & \mathbf{x} & \ddots & \vdots \\ 
\vdots & \ddots & \mathbf{\ddots } & 0 \\ 
0 & \ldots & 0 & \mathbf{x}%
\end{array}%
\right] ,
\end{equation*}%
where $X$ is the $r(p+1)\times r$ matrix of eigenvectors of $A,$ it follows
that $XCX^{T}$ is a permutative nonnegative matrix, and from Theorem \ref%
{Rado}, with $C=CX^{T},$ $P=A+XCX^{T}$ is a permutative nonnegative matrix
with spectrum $\Lambda .$ Observe that $P$ is also an $r\times r$ block
permutative nonnegative matrix.
\end{proof}

\begin{example}
Let $\Lambda =\{10,4,2,0,-1,-1,-1,-1,-3,-3,-3,-3\}.$ We take the partition%
\begin{eqnarray*}
\Lambda &=&\Lambda _{0}\cup \Lambda _{1}\cup \Lambda _{1}\cup \Lambda
_{1}\cup \Lambda \text{ \ with} \\
\Lambda _{0} &=&\{10,4,2,0\},\text{ \ }\Lambda _{1}=\{-1,-3\},\text{ \ }%
\Gamma _{1}=\{4,-1,-3\}.
\end{eqnarray*}%
Then we look for a permutative nonnegative matrix $P_{1}$ with spectrum $%
\Gamma _{1},$ and a permutative nonnegative matrix $B$ with spectrum $%
\Lambda _{0}$ and diagonal entries $4,4,4,4.$ These matrices are 
\begin{equation*}
P_{1}=\left[ 
\begin{array}{ccc}
0 & 1 & 3 \\ 
1 & 0 & 3 \\ 
3 & 1 & 0%
\end{array}%
\right] ,\text{ \ }B=\left[ 
\begin{array}{cccc}
4 & 0 & 2 & 4 \\ 
0 & 4 & 2 & 4 \\ 
2 & 0 & 4 & 4 \\ 
4 & 0 & 2 & 4%
\end{array}%
\right] ,
\end{equation*}%
obtained from Theorem \ref{Bra} and Lemma \ref{Mlanda}, respectively. Let 
\begin{equation*}
A=\left[ 
\begin{array}{cccc}
P_{1} &  &  &  \\ 
& P_{1} &  &  \\ 
&  & P_{1} &  \\ 
&  &  & P_{1}%
\end{array}%
\right] ,\text{ \ }X=\left[ 
\begin{array}{cccc}
\mathbf{x} & 0 & 0 & 0 \\ 
0 & \mathbf{x} & 0 & 0 \\ 
0 & 0 & \mathbf{x} & 0 \\ 
0 & 0 & 0 & \mathbf{x}%
\end{array}%
\right] ,\text{ \ with \ }\mathbf{x=}\frac{1}{\sqrt{3}}\mathbf{e.}
\end{equation*}%
Then for $C=B-diag\{4,4,4,4\}$ we have that%
\begin{eqnarray*}
P &=&A+XCX^{T} \\
&=&\left[ 
\begin{array}{cccccccccccc}
0 & 1 & 3 & 0 & 0 & 0 & \frac{2}{3} & \frac{2}{3} & \frac{2}{3} & \frac{4}{3}
& \frac{4}{3} & \frac{4}{3} \\ 
1 & 0 & 3 & 0 & 0 & 0 & \frac{2}{3} & \frac{2}{3} & \frac{2}{3} & \frac{4}{3}
& \frac{4}{3} & \frac{4}{3} \\ 
3 & 1 & 0 & 0 & 0 & 0 & \frac{2}{3} & \frac{2}{3} & \frac{2}{3} & \frac{4}{3}
& \frac{4}{3} & \frac{4}{3} \\ 
0 & 0 & 0 & 0 & 1 & 3 & \frac{2}{3} & \frac{2}{3} & \frac{2}{3} & \frac{4}{3}
& \frac{4}{3} & \frac{4}{3} \\ 
0 & 0 & 0 & 1 & 0 & 3 & \frac{2}{3} & \frac{2}{3} & \frac{2}{3} & \frac{4}{3}
& \frac{4}{3} & \frac{4}{3} \\ 
0 & 0 & 0 & 3 & 1 & 0 & \frac{2}{3} & \frac{2}{3} & \frac{2}{3} & \frac{4}{3}
& \frac{4}{3} & \frac{4}{3} \\ 
\frac{2}{3} & \frac{2}{3} & \frac{2}{3} & 0 & 0 & 0 & 0 & 1 & 3 & \frac{4}{3}
& \frac{4}{3} & \frac{4}{3} \\ 
\frac{2}{3} & \frac{2}{3} & \frac{2}{3} & 0 & 0 & 0 & 1 & 0 & 3 & \frac{4}{3}
& \frac{4}{3} & \frac{4}{3} \\ 
\frac{2}{3} & \frac{2}{3} & \frac{2}{3} & 0 & 0 & 0 & 3 & 1 & 0 & \frac{4}{3}
& \frac{4}{3} & \frac{4}{3} \\ 
\frac{4}{3} & \frac{4}{3} & \frac{4}{3} & 0 & 0 & 0 & \frac{2}{3} & \frac{2}{%
3} & \frac{2}{3} & 0 & 1 & 3 \\ 
\frac{4}{3} & \frac{4}{3} & \frac{4}{3} & 0 & 0 & 0 & \frac{2}{3} & \frac{2}{%
3} & \frac{2}{3} & 1 & 0 & 3 \\ 
\frac{4}{3} & \frac{4}{3} & \frac{4}{3} & 0 & 0 & 0 & \frac{2}{3} & \frac{2}{%
3} & \frac{2}{3} & 3 & 1 & 0%
\end{array}%
\right]
\end{eqnarray*}%
is a permutative nonnegative matrix with spectrum $\Lambda .$ Observe that $%
P $ is also a $4\times 4$ block permutative nonnegative matrix with
permutative blocks.
\end{example}

\begin{remark}
If in the proof of Theorem \ref{So3} the matrices $P_{1}$ and $B$ can be
choosen as symmetric permutative nonnegative, then $A+XCX^{T}$ becomes
symmetric permutative nonnegative. In fact, if $r=3,$ for instance, we have
that%
\begin{eqnarray*}
&&\left[ 
\begin{array}{ccc}
P_{1} &  &  \\ 
& P_{1} &  \\ 
&  & P_{1}%
\end{array}%
\right] +\left[ 
\begin{array}{ccc}
\mathbf{x} & 0 & 0 \\ 
0 & \mathbf{x} & 0 \\ 
0 & 0 & \mathbf{x}%
\end{array}%
\right] \left[ 
\begin{array}{ccc}
0 & c & c \\ 
c & 0 & c \\ 
c & c & 0%
\end{array}%
\right] \left[ 
\begin{array}{ccc}
\mathbf{x}^{T} & 0 & 0 \\ 
0 & \mathbf{x}^{T} & 0 \\ 
0 & 0 & \mathbf{x}^{T}%
\end{array}%
\right] \\
&=&\left[ 
\begin{array}{ccc}
P_{1} &  &  \\ 
& P_{1} &  \\ 
&  & P_{1}%
\end{array}%
\right] +\left[ 
\begin{array}{ccc}
0 & c\mathbf{xx}^{T} & c\mathbf{xx}^{T} \\ 
c\mathbf{xx}^{T} & 0 & c\mathbf{xx}^{T} \\ 
c\mathbf{xx}^{T} & c\mathbf{xx}^{T} & 0%
\end{array}%
\right] \\
&=&\left[ 
\begin{array}{ccc}
P_{1} & c\mathbf{xx}^{T} & c\mathbf{xx}^{T} \\ 
c\mathbf{xx}^{T} & P_{1} & c\mathbf{xx}^{T} \\ 
c\mathbf{xx}^{T} & c\mathbf{xx}^{T} & P_{1}%
\end{array}%
\right]
\end{eqnarray*}%
is symmetric permutative nonnegative.
\end{remark}

\noindent In particular, for $n=3,$ we have the following pattern of
symmetric permutative nonnegative matrices $B:$%
\begin{equation*}
i)\text{ }\left[ 
\begin{array}{ccc}
a & b & c \\ 
b & c & a \\ 
c & a & b%
\end{array}%
\right] ,\text{ }ii)\text{ }\left[ 
\begin{array}{ccc}
a & a & b \\ 
a & b & a \\ 
b & a & a%
\end{array}%
\right] ,\text{ }iii)\text{ }\left[ 
\begin{array}{ccc}
a & b & b \\ 
b & a & b \\ 
b & b & a%
\end{array}%
\right] ,
\end{equation*}%
with eigenvalues of the form $\lambda _{1},\lambda _{2},-\lambda _{2},$ \
and \ $\lambda _{1},\lambda _{2},\lambda _{2}.$

\noindent Conditions for cases $ii)$ and $iii)$ are%
\begin{eqnarray*}
ii)\text{ }a &=&\frac{1}{3}(\lambda _{1}+\lambda _{2}),\text{ }b=\frac{1}{3}%
(\lambda _{1}-2\lambda _{2}) \\
iii)\text{ }a &=&\frac{1}{3}(\lambda _{1}+2\lambda _{2}),\text{ }b=\frac{1}{3%
}(\lambda _{1}-\lambda _{2}).
\end{eqnarray*}

\noindent However, except for the case $iii),$ $C=B-diagB$\ need not to be
permutative. Consider the following example:

\begin{example}
Let $\Lambda =\{8,6,3,3,-5,-5,-5,-5\}$ with the partition%
\begin{equation*}
\Lambda _{0}=\{8,6,3,3\},\text{ }\Lambda _{1}=\{-5\},\text{ }\Gamma
_{1}=\{5,-5\}.
\end{equation*}%
We compute the matrices%
\begin{equation*}
P_{1}=\left[ 
\begin{array}{cc}
0 & 5 \\ 
5 & 0%
\end{array}%
\right] ,\text{ \ }B=\left[ 
\begin{array}{cccc}
5 & 2 & \frac{1}{2} & \frac{1}{2} \\ 
2 & 5 & \frac{1}{2} & \frac{1}{2} \\ 
\frac{1}{2} & \frac{1}{2} & 5 & 2 \\ 
\frac{1}{2} & \frac{1}{2} & 2 & 5%
\end{array}%
\right] ,
\end{equation*}%
with spectrum $\Gamma _{1}$ and spectrum $\Lambda _{0}$ and diagonal entries 
$5,5,5,5,$ respectively. Then 
\begin{eqnarray*}
A &=&\left[ 
\begin{array}{cccc}
P_{1} &  &  &  \\ 
& P_{1} &  &  \\ 
&  & P_{1} &  \\ 
&  &  & P_{1}%
\end{array}%
\right] +XCX^{T} \\
&=&\left[ 
\begin{array}{cccccccc}
0 & 5 & 1 & 1 & \frac{1}{4} & \frac{1}{4} & \frac{1}{4} & \frac{1}{4} \\ 
5 & 0 & 1 & 1 & \frac{1}{4} & \frac{1}{4} & \frac{1}{4} & \frac{1}{4} \\ 
1 & 1 & 0 & 5 & \frac{1}{4} & \frac{1}{4} & \frac{1}{4} & \frac{1}{4} \\ 
1 & 1 & 5 & 0 & \frac{1}{4} & \frac{1}{4} & \frac{1}{4} & \frac{1}{4} \\ 
\frac{1}{4} & \frac{1}{4} & \frac{1}{4} & \frac{1}{4} & 0 & 5 & 1 & 1 \\ 
\frac{1}{4} & \frac{1}{4} & \frac{1}{4} & \frac{1}{4} & 5 & 0 & 1 & 1 \\ 
\frac{1}{4} & \frac{1}{4} & \frac{1}{4} & \frac{1}{4} & 1 & 1 & 0 & 5 \\ 
\frac{1}{4} & \frac{1}{4} & \frac{1}{4} & \frac{1}{4} & 1 & 1 & 5 & 0%
\end{array}%
\right]
\end{eqnarray*}%
is symmetric permutative nonnegative with spectrum $\Lambda .$
\end{example}

\noindent We finish this section by given a negative answer the question in 
\cite{Paparella}, that is, we show that there are lists of real numbers
which are the spectrum of a nonnegative matrix, but not the spectrum of a
permutative nonnegative matrix.

\begin{lemma}
There is no permutative nonnegative matrix with spectrum $\Lambda
=\{6,5,1\}. $
\end{lemma}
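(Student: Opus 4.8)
The plan is to derive numerical constraints from the elementary symmetric functions of the prescribed spectrum and show they are incompatible with the permutative nonnegative structure. Write $\mathbf{x}=(a,b,c)^{T}$ for the first row, so any realizing permutative matrix $P$ lies in $\mathcal{CS}_{S}$ with $S=a+b+c$ and $a,b,c\geq 0$. Since $P$ is nonnegative with constant row sums, its spectral radius equals $S$; as the Perron eigenvalue must be the largest modulus eigenvalue $6$, this gives $a+b+c=6$. The remaining invariants read $\operatorname{tr}(P)=6+5+1=12$, $\det(P)=6\cdot 5\cdot 1=30$, and $\operatorname{tr}(P^{2})=6^{2}+5^{2}+1^{2}=62$.

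First I would use the trace to pin down $\mathbf{x}$. Each diagonal entry $P_{ii}$ is one of $a,b,c$, with $P_{11}=a$, so $a+P_{22}+P_{33}=12$ with $P_{22},P_{33}\in\{a,b,c\}$. A short analysis of this equation subject to $a+b+c=6$ and $0\leq a,b,c\leq 6$ forces exactly one of two situations: either $\mathbf{x}$ is a permutation of $(6,0,0)$, or $a=4$ and $b+c=2$ with all three diagonal entries equal to $4$ (the latter because then $P_{22}+P_{33}=8$ cannot be attained from $\{4,b,c\}$ with $b,c\leq 2$ except as $4+4$).

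In the first situation every row of $P$ has a single nonzero entry equal to $6$, so $P=6Q$ with $Q$ a $0$–$1$ matrix having exactly one $1$ per row; hence $\det Q\in\{-1,0,1\}$ and $\det P=216\det Q\in\{-216,0,216\}$, which cannot equal $30$. In the remaining situation all off-diagonal entries of $P$ lie in $\{b,c\}\subseteq[0,2]$, and I would bound the second moment: $\operatorname{tr}(P^{2})=\sum_{i}P_{ii}^{2}+\sum_{i\neq j}P_{ij}P_{ji}=48+2\sum_{i<j}P_{ij}P_{ji}$. Enumerating the four permutation patterns available for rows $2$ and $3$, the cross-sum $\sum_{i<j}P_{ij}P_{ji}$ equals either $3bc$ or $b^{2}+bc+c^{2}=4-bc$; using $b+c=2$ and $b,c\geq 0$ gives $0\leq bc\leq 1$, so in every pattern $\sum_{i<j}P_{ij}P_{ji}\leq 4$ and therefore $\operatorname{tr}(P^{2})\leq 56<62$, a contradiction.

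I expect the main obstacle to be the case bookkeeping in the trace step: one must verify that nonnegativity together with $a+b+c=6$ genuinely collapses the diagonal to the two listed possibilities and leaves no spurious solutions. Once the structure is reduced to $a=4$, $b+c=2$, the inequality $\operatorname{tr}(P^{2})\leq 56<62$ is a one-line computation, and the degenerate $(6,0,0)$ case is dispatched by the integrality of $\det Q$.
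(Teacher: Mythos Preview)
Your argument is correct, and the case bookkeeping you flag as the delicate point does go through: with $a+b+c=6$ and $a+P_{22}+P_{33}=12$, running over the six multisets $\{P_{22},P_{33}\}\subseteq\{a,b,c\}$ gives either $3a=12$ (whence $a=4$, $b+c=2$, and then $b,c\le 2$ forces $P_{22}=P_{33}=4$ as you note) or one coordinate equal to $6$ and the other two equal to $0$. The determinant kills the $(6,0,0)$ branch and the bound $\operatorname{tr}(P^{2})\le 56$ kills the $a=4$ branch, so the proof closes.

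This is a genuinely different route from the paper. The paper makes the same opening moves ($a+b+c=6$, $\operatorname{tr}(P)=12$) and the same trichotomy on the diagonal pattern, but then disposes of each case by invoking Perfect's necessary and sufficient conditions for the existence of a $3\times 3$ nonnegative matrix with prescribed spectrum and prescribed diagonal entries (Theorem~4 of \cite{Perfect1}). Your version replaces that citation with two elementary invariants, $\det P$ and $\operatorname{tr}(P^{2})$, and a short enumeration of the four row patterns. The advantage of your approach is that it is entirely self-contained and needs no external realizability criterion; the paper's approach is shorter on the page because the casework is outsourced to Perfect's theorem.
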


\begin{proof}
It is clear that $\Lambda =\{6,5,1\}$ is realizable (trivially by $%
A=diag\{6,5,1\}$): For instance, the matrix%
\begin{equation*}
A=\left[ 
\begin{array}{ccc}
3 & 0 & 2 \\ 
0 & 6 & 0 \\ 
2 & 0 & 3%
\end{array}%
\right] \text{ has the spectrum }\Lambda .
\end{equation*}%
Suppose $P$ is a permutative nonnegative matrix with spectrum $\Lambda $ and
first row $\left( a,b,c\right) .$ Then $a+b+c=6,$ $P\in \mathcal{CS}_{6},$ $%
tr(P)=12.$ We have three cases:\newline

\noindent $i)$ \ The entries of the main diagonal of $P$ are of the form $%
x,x,x.$ Then $x=4$ and the Perfect necessary and sufficient conditions for
the existence of a nonnegative matrix with prescibed spectrum $\Lambda $ and
diagonal entries $4,4,4$ are not satisfied \cite[Theorem $4,$ condition $(4)$%
]{Perfect1}. Therefore, there is no permutative nonnegative matrix with
spectrum $\Lambda .$\newline

\noindent $ii)$ The entries of the main diagonal of $P$ are of the form $%
x,x,y.$ Then $2x+y=12,$ and from de Perfect conditions \cite[Theorem $4,$
condition $(4)$]{Perfect1}, $x\geq 5,$ $y\leq 2$ or $y\geq 5,$ $x\leq \frac{7%
}{2},$ which contradicts $x+y+z=6,$ except for $x=6,$ $y=z=0.$ In this last
case however, the conditions in \cite[Theorem $4$]{Perfect1} are not
satisfied either.\newline

\noindent $iii)$ The entries of the main diagonal of $P$ are of the form $%
x,y,z.$ Then $x+y+z=12$ contradicts $x+y+z=6.$\newline
Thus, $\Lambda $ cannot be the spectrum of a permutative nonnegative matrix
with spectrum $\Lambda .$
\end{proof}

\bigskip

\noindent Observe, however that $\Lambda =\{6,5,1\}$ is the spectrum of the
direct sum of permutative nonnegative matrices%
\begin{equation*}
A=\left[ 
\begin{array}{ccc}
\frac{11}{2} & \frac{1}{2} & 0 \\ 
\frac{1}{2} & \frac{11}{2} & 0 \\ 
0 & 0 & 1%
\end{array}%
\right] .
\end{equation*}

\noindent After this paper was submitted, R. Loewy \cite{Loewy2} showed that
a realizable list of real numbers need not to be the spectrum of a
permutative nonnegative matrix nor the spectrum of a direct sum of
permutative nonnegative matrices.

\section{Permutative matrices with precribed complex spectrum}

\noindent In this section we show that certain lists of complex numbers are
realizable by permutative nonnegative matrices. First we recall the result
of Loewy and London \cite{Loewy}, which solves the NIEP for $n=3:$

\begin{theorem}
\label{LoLo}Let $\Lambda =\{\lambda _{1},\lambda _{2},\lambda _{3}\}$ be a
list of complex numbers. Then $\Lambda $ is the spectrum of a nonnegative
matrix if and only if 
\begin{equation*}
\begin{array}{ccc}
\Lambda = & \overline{\Lambda },\text{ \ }\lambda _{1}\geq \left\vert
\lambda _{j}\right\vert ,\text{ \ }j=2,3,\text{ \ }\lambda _{1}+\lambda
_{2}+\lambda _{3} & \geq 0 \\ 
& (\lambda _{1}+\lambda _{2}+\lambda _{3})^{2}\leq 3(\lambda
_{1}^{2}+\lambda _{2}^{2}+\lambda _{3}^{2}). & 
\end{array}%
\end{equation*}
\end{theorem}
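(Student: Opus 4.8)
The plan is to prove the two implications separately, treating self-conjugacy, the Perron inequality, and the trace condition as essentially immediate, and isolating the fourth inequality $(\lambda_1+\lambda_2+\lambda_3)^2\leq 3(\lambda_1^2+\lambda_2^2+\lambda_3^2)$ as the substantive content. For \emph{necessity}, let $A\geq 0$ be $3\times 3$ with spectrum $\Lambda$. Since $A$ is real, its characteristic polynomial has real coefficients, which forces $\Lambda=\overline{\Lambda}$; Perron--Frobenius gives $\lambda_1=\rho(A)\geq|\lambda_j|$; and $\lambda_1+\lambda_2+\lambda_3=\operatorname{tr}A=\sum_i a_{ii}\geq 0$. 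For the last inequality I would use $\operatorname{tr}A=\sum_i\lambda_i$ and $\operatorname{tr}(A^2)=\sum_i\lambda_i^2$ together with the algebraic identity
\begin{equation*}
3\operatorname{tr}(A^2)-(\operatorname{tr}A)^2=\sum_{i<j}(a_{ii}-a_{jj})^2+6\sum_{i<j}a_{ij}a_{ji},
\end{equation*}
whose right-hand side is nonnegative because $A\geq 0$; this yields $(\sum_i\lambda_i)^2\leq 3\sum_i\lambda_i^2$.

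For \emph{sufficiency} I would split according to whether the two non-Perron eigenvalues form a genuine complex-conjugate pair. In the real case, say $\lambda_1\geq\lambda_2\geq\lambda_3$ all real (possibly with repetitions), the fourth condition is automatic, being $\sum_{i<j}(\lambda_i-\lambda_j)^2\geq 0$, so I only need $\lambda_1\geq|\lambda_j|$ and $\lambda_1+\lambda_2+\lambda_3\geq 0$. I would then exhibit a realizing matrix directly: if $\lambda_3\geq 0$, take $\operatorname{diag}(\lambda_1,\lambda_2,\lambda_3)$; if $\lambda_2\geq 0>\lambda_3$, take the direct sum of $[\lambda_2]$ with the symmetric nonnegative $2\times 2$ matrix of row sum $\lambda_1$ and eigenvalues $\lambda_1,\lambda_3$ (nonnegative since $\lambda_1+\lambda_3\geq 0$); and if $\lambda_2<0$, invoke Theorem \ref{Sulei}, whose trace hypothesis is exactly $\lambda_1+\lambda_2+\lambda_3\geq 0$.

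For the conjugate-pair case, write $\lambda_2=a+bi$, $\lambda_3=a-bi$ with $b\neq 0$, and search for a realizing matrix inside the circulant family
\begin{equation*}
A=\left[\begin{array}{ccc}\alpha&\beta&\gamma\\\gamma&\alpha&\beta\\\beta&\gamma&\alpha\end{array}\right],
\end{equation*}
whose eigenvalues are $\alpha+\beta+\gamma$ and $\alpha-\frac{1}{2}(\beta+\gamma)\pm\frac{\sqrt{3}}{2}(\beta-\gamma)i$. Matching to $\Lambda$ gives $\alpha=\frac{1}{3}(\lambda_1+2a)$, $\beta=\frac{1}{3}(\lambda_1-a)+\frac{b}{\sqrt{3}}$, and $\gamma=\frac{1}{3}(\lambda_1-a)-\frac{b}{\sqrt{3}}$. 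Nonnegativity of $\alpha$ is precisely the trace condition, while nonnegativity of $\beta$ and $\gamma$ is equivalent to $\lambda_1-a\geq\sqrt{3}\,|b|$, i.e. $(\lambda_1-a)^2\geq 3b^2$ (here $\lambda_1\geq a$ follows from the Perron inequality $\lambda_1\geq\sqrt{a^2+b^2}$). The point is that a short computation gives $3\sum_i\lambda_i^2-(\sum_i\lambda_i)^2=2\big[(\lambda_1-a)^2-3b^2\big]$, so this is exactly the fourth hypothesis.

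I expect necessity and the real subcases to be routine. The real work is in the conjugate-pair case, and specifically in recognizing that the circulant ansatz is the right parametric family and that its nonnegativity constraints collapse \emph{exactly} onto the trace condition (for $\alpha$) and the fourth inequality (for $\beta,\gamma$). Once that ansatz is fixed, everything reduces to the direct verification above; the conceptual obstacle is choosing the family, not carrying out the algebra.
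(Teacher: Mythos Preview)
Your proof is correct. Note, however, that the paper does not actually prove Theorem~\ref{LoLo}; it is quoted from Loewy and London \cite{Loewy} and stated without proof. The only hint the paper gives about the original argument is in Corollary~\ref{Co1}, which invokes the fact that ``the realizing matrix in the proof of Theorem~\ref{LoLo} is circulant.'' Your sufficiency argument in the conjugate-pair case is therefore exactly in line with the cited approach: you use the circulant ansatz and verify that its nonnegativity constraints reduce to the trace condition and the fourth inequality.

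Your treatment of the real subcases (diagonal, $2\times 2$ plus $1\times 1$, or Suleimanova) and of necessity via the identity $3\operatorname{tr}(A^2)-(\operatorname{tr}A)^2=\sum_{i<j}(a_{ii}-a_{jj})^2+6\sum_{i<j}a_{ij}a_{ji}$ are not visible in this paper at all, but they are standard and sound. One small point worth making explicit: in the real case you should observe that the first three conditions already force $\lambda_1\geq 0$ (hence really is the Perron root), so the case splits are well-posed; and in the conjugate-pair case the step from $(\lambda_1-a)^2\geq 3b^2$ to $\lambda_1-a\geq\sqrt{3}\,|b|$ uses $\lambda_1\geq a$, which you correctly derive from $\lambda_1\geq|\lambda_2|$.
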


\noindent Then we have the following

\begin{corollary}
\label{Co1}Every realizable list $\Lambda =\{\lambda _{1},a+bi,a-bi\}$ of
complex numbers is in particular realizable by a permutative nonnegative
matrix.
\end{corollary}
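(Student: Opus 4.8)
The plan is to realize $\Lambda$ by a circulant matrix, which is automatically permutative since each of its rows is a cyclic permutation of the first. Writing the first row as $(x,y,z)$, I would take
\[
P=\left[
\begin{array}{ccc}
x & y & z \\
z & x & y \\
y & z & x
\end{array}
\right],
\]
and exploit the fact that the eigenvalues of a $3\times 3$ circulant are $x+y+z$ and $x+y\omega^{j}+z\omega^{2j}$ for $j=1,2$, where $\omega=e^{2\pi i/3}$. Since $\Lambda$ consists of the Perron root $\lambda_{1}$ together with the conjugate pair $a\pm bi$, matching $\lambda_{1}=x+y+z$ and $a+bi=x+y\omega+z\omega^{2}$ (separating real and imaginary parts gives $x-\tfrac12(y+z)=a$ and $\tfrac{\sqrt3}{2}(y-z)=b$) yields a linear system whose solution is
\[
x=\frac{\lambda_{1}+2a}{3},\qquad y=\frac{\lambda_{1}-a}{3}+\frac{b}{\sqrt3},\qquad z=\frac{\lambda_{1}-a}{3}-\frac{b}{\sqrt3}.
\]

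Having produced these candidate entries, the remaining task is to show that $x,y,z\ge 0$ precisely under the hypotheses of Theorem \ref{LoLo}. First I would observe that $x\ge 0$ is exactly the trace condition $\lambda_{1}+\lambda_{2}+\lambda_{3}=\lambda_{1}+2a\ge 0$. For $y$ and $z$ it clearly suffices to establish $\lambda_{1}-a\ge\sqrt3\,|b|$, since then both fractional expressions dominate the $\pm b/\sqrt3$ correction.

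The hard part, though it reduces to a short computation, will be extracting this last inequality from the fourth Loewy--London condition. I expect that expanding $(\lambda_{1}+\lambda_{2}+\lambda_{3})^{2}\le 3(\lambda_{1}^{2}+\lambda_{2}^{2}+\lambda_{3}^{2})$ using $\lambda_{2}^{2}+\lambda_{3}^{2}=2(a^{2}-b^{2})$ collapses to $(\lambda_{1}-a)^{2}\ge 3b^{2}$, that is $|\lambda_{1}-a|\ge\sqrt3\,|b|$. Combined with the Perron condition $\lambda_{1}\ge|a+bi|=\sqrt{a^{2}+b^{2}}\ge a$, which forces $\lambda_{1}-a\ge 0$, this gives $\lambda_{1}-a\ge\sqrt3\,|b|$ and hence $y,z\ge 0$. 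Consequently $P$ is a permutative nonnegative matrix with spectrum $\Lambda$; the degenerate case $b=0$ (three real eigenvalues) is covered by the same formulas, so no separate argument is needed.
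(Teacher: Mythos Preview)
Your argument is correct and follows exactly the route the paper takes: the paper's proof simply notes that the realizing matrix in Loewy and London's proof of Theorem~\ref{LoLo} is circulant, hence permutative, whereas you have written out that circulant construction explicitly and verified by hand that the Loewy--London conditions force $x,y,z\ge 0$. So the approach is the same; you have merely supplied the details that the paper leaves to the cited reference.
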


\begin{proof}
The realizing matrix in the proof of Theorem \ref{LoLo} is circulant. Since
circulant matrices are permutative the result follows.
\end{proof}

\begin{corollary}
Let $\Lambda =\{\lambda _{1},\lambda _{2},\ldots \lambda _{n}\}$ be a list
of complex numbers. If there exists a partition $\Lambda =\Lambda _{1}\cup
\cdots \cup \Lambda _{t},$ where $\Lambda _{i}=\{\lambda _{i1},\lambda
_{i2},\lambda _{i3}\},$ $i=1,2,\ldots ,t,$ satisfies conditions of Theorem %
\ref{LoLo}, then $\Lambda $ is the spectrum of a direct sum of permutative
nonnegative matrices with spectrum $\Lambda .$
\end{corollary}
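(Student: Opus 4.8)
The plan is to build the realizing matrix block by block and then take a direct sum. First I would invoke the preceding Corollary~\ref{Co1}: by hypothesis each sublist $\Lambda_i=\{\lambda_{i1},\lambda_{i2},\lambda_{i3}\}$ satisfies the Loewy--London conditions of Theorem~\ref{LoLo}, so each $\Lambda_i$ is realizable as the spectrum of a $3\times 3$ nonnegative matrix. Since Corollary~\ref{Co1} asserts that any such realizable three-element list (a real Perron value together with a conjugate pair, possibly degenerate) is in fact realizable by a \emph{permutative} nonnegative matrix, we obtain for each $i$ a $3\times 3$ permutative nonnegative matrix $P_i$ with spectrum $\Lambda_i$. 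The reason Corollary~\ref{Co1} applies is that the Loewy--London realizing matrix is circulant, and circulant matrices are permutative.

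Next I would assemble the block-diagonal matrix $P=P_1\oplus P_2\oplus\cdots\oplus P_t$. By construction $P$ is a direct sum of permutative nonnegative matrices. Because the characteristic polynomial of a block-diagonal matrix factors as the product of the characteristic polynomials of its diagonal blocks, the spectrum of $P$ is the union $\Lambda_1\cup\cdots\cup\Lambda_t$, which by the assumed partition equals $\Lambda$. This establishes that $\Lambda$ is the spectrum of a direct sum of permutative nonnegative matrices, exactly as claimed.

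There is essentially no obstacle here; the statement is an immediate consequence of Corollary~\ref{Co1} combined with the multiplicativity of the characteristic polynomial under direct sums. The single point worth flagging is that I do \emph{not} claim $P$ itself is permutative---its off-diagonal blocks vanish, so rows drawn from distinct blocks are generally not permutations of one another---but only that $P$ is a direct sum of permutative nonnegative matrices, which is precisely what the corollary requires. This is in keeping with the earlier remark in the paper that $\{6,5,1\}$, though not the spectrum of any single permutative nonnegative matrix, is realizable as such a direct sum.
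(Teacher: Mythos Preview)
Your proposal is correct and follows essentially the same approach as the paper, which states only that the result is immediate from Corollary~\ref{Co1}. You have spelled out in more detail what ``immediate'' means here---apply Corollary~\ref{Co1} to each $\Lambda_i$ and take the direct sum---and your closing caveat that the resulting matrix is a direct sum of permutative matrices (rather than itself permutative) is apt.
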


\begin{proof}
The proof is immediate from Corollary \ref{Co1}.
\end{proof}

\bigskip

\begin{remark}
Theorem \ref{So3} can also be applied to a list of complex numbers, as the
following example shows:
\end{remark}

\begin{example}
Let $\Lambda =\{3,2,1,-1\pm i,-1\pm i,-1\pm i\}$ with%
\begin{equation*}
\Lambda _{0}=\{3,2,1\},\ \Gamma _{1}=\{2,-1+i,-1-i\}.
\end{equation*}%
The matrices%
\begin{equation*}
P_{1}=\left[ 
\begin{array}{ccc}
0 & 1-\frac{\sqrt{3}}{3} & 1+\frac{\sqrt{3}}{3} \\ 
1+\frac{\sqrt{3}}{3} & 0 & 1-\frac{\sqrt{3}}{3} \\ 
1-\frac{\sqrt{3}}{3} & 1+\frac{\sqrt{3}}{3} & 0%
\end{array}%
\right] ,\text{ \ }B=\left[ 
\begin{array}{ccc}
2 & 0 & 1 \\ 
0 & 2 & 1 \\ 
0 & 1 & 2%
\end{array}%
\right]
\end{equation*}%
are permutative with spectrum $\Gamma _{1}$ and $\Lambda _{0},$
respectively. Moreover $B$ has the required diagonal entries. Then%
\begin{equation*}
P=\left[ 
\begin{array}{ccc}
P_{1} &  &  \\ 
& P_{1} &  \\ 
&  & P_{1}%
\end{array}%
\right] +XCX^{T}=\left[ 
\begin{array}{ccc}
P_{1} & 0 & \frac{1}{3}\mathbf{ee}^{T} \\ 
0 & P_{1} & \frac{1}{3}\mathbf{ee}^{T} \\ 
0 & \frac{1}{3}\mathbf{ee}^{T} & P_{1}%
\end{array}%
\right]
\end{equation*}%
is permutative nonnegative with spectrum $\Lambda .$
\end{example}

\noindent Next we recall that an $n\times n$ circulant matrix is a matrix of
the form%
\begin{equation*}
C=\left[ 
\begin{array}{ccccc}
c_{0} & c_{1} & c_{2} & \cdots & c_{n-1} \\ 
c_{n-1} & c_{0} & c_{1} & \ddots & \vdots \\ 
\ddots & c_{n-1} & \ddots & \ddots & c_{2} \\ 
c_{2} & \ddots & \ddots & \ddots & c_{1} \\ 
c_{1} & c_{2} & \cdots & c_{n-1} & c_{0}%
\end{array}%
\right] ,
\end{equation*}%
and it is uniquely determined by the entries of its first row, which we
denoted by $\mathbf{c}=\left( c_{0},c_{1},\ldots ,c_{n-1}\right) .$ It is
clear that $C$ is also permutative. Let $\mathbf{\lambda }=\left( \lambda
_{1},\lambda _{2},\ldots ,\lambda _{n}\right) $ with%
\begin{eqnarray*}
\lambda _{1} &=&c_{0}+c_{1}+\cdots +c_{n-1} \\
\lambda _{j} &=&c_{0}+c_{1}\omega ^{j-1}+c_{2}\omega ^{2(j-1)}+\cdots
+c_{n-1}\omega ^{(n-1)(j-1)},\text{ }j=2,\ldots ,n, \\
\omega &=&\exp \left( \frac{2\pi i}{n}\right) ,
\end{eqnarray*}%
being the eigenvalues of the circulant matrix $C=circ\left(
c_{0},c_{1},\ldots ,c_{n-1}\right) .$ Then%
\begin{equation}
\lambda _{n-j+2}=\overline{\lambda _{j}},\text{ \ }j=2,3,\ldots ,\left[ 
\frac{n+1}{2}\right] .  \label{pj}
\end{equation}%
Let 
\begin{eqnarray*}
F &=&\left( f_{kj}\right) =\left[ \mathbf{1}\mid \mathbf{v}_{2}\mid \cdots
\mid \mathbf{v}_{n}\right] \text{ \ with} \\
\mathbf{1} &\mathbf{=}&\left( 1,1,\ldots ,1\right) ^{T} \\
\mathbf{v}_{j} &=&\left( 1,\omega ^{j-1},\omega ^{2(j-1)},\ldots ,\omega
^{(n-1)(j-1)}\right) ^{T},\text{ }j=2,\ldots ,n.
\end{eqnarray*}%
Then%
\begin{eqnarray}
f_{kj} &=&\omega ^{(k-1)(j-1)},\text{ \ }1\leq k,j\leq n,\text{ \ }F%
\overline{F}=\overline{F}F=nI,  \notag \\
F\mathbf{c} &=&\mathbf{\lambda }\text{ \ and \ }\mathbf{c=}\frac{1}{n}%
\overline{F}\mathbf{\lambda .}  \label{F}
\end{eqnarray}%
The following result shows that a list of complex Suleimanova type, with the
property (\ref{pj}), is realizable by a permutative nonnegative matrix.

\begin{theorem}
\label{RLS}Let $\Lambda =\{\lambda _{1},\lambda _{2},\ldots ,\lambda _{n}\}$
be a list of complex numbers with%
\begin{equation*}
\lambda _{j}\in \{z\in 
%TCIMACRO{\U{2102} }%
%BeginExpansion
\mathbb{C}
%EndExpansion
:\func{Re}z\leq 0,\text{ }\left\vert \func{Re}z\right\vert \geq \left\vert 
\func{Im}z\right\vert \},\text{ \ }j=2,3,\ldots ,n,
\end{equation*}%
satisfying $\lambda _{n-j+2}=\overline{\lambda _{j}},$ \ $j=2,3,\ldots ,%
\left[ \frac{n+1}{2}\right] .$ Then $\Lambda $ is the spectrum of a
permutative nonnegative matrix if and only if $\dsum\limits_{j=1}^{n}\lambda
_{j}\geq 0.$
\end{theorem}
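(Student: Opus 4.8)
The plan is to realize $\Lambda$ by a circulant matrix, exploiting that circulants are permutative and that the pairing condition $\lambda_{n-j+2}=\overline{\lambda_j}$ in (\ref{pj}) is precisely the symmetry which forces a circulant with prescribed spectrum to be real. Necessity is immediate: if $A\geq 0$ has spectrum $\Lambda$, then $\sum_{j=1}^{n}\lambda_j=\operatorname{tr}(A)\geq 0$ because the diagonal entries of $A$ are nonnegative. For sufficiency I would take the candidate first row $\mathbf{c}=\frac{1}{n}\overline{F}\mathbf{\lambda}$ from (\ref{F}) and set $C=circ(c_0,\ldots,c_{n-1})$, reducing everything to checking that $C$ is real, nonnegative, and has spectrum $\Lambda$.

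That $C$ has spectrum $\Lambda$ is free: from $F\overline{F}=nI$ one gets $F\mathbf{c}=\mathbf{\lambda}$, so the circulant eigenvalues are exactly $\lambda_1,\ldots,\lambda_n$. To see that $\mathbf{c}$ is real, note that (\ref{pj}) says $\overline{\mathbf{\lambda}}=P\mathbf{\lambda}$, where $P$ is the permutation $j\mapsto n-j+2$ (fixing the index of the real Perron value $\lambda_1$); a direct check gives $FP=\overline{F}$, whence $\overline{\mathbf{c}}=\frac{1}{n}F\overline{\mathbf{\lambda}}=\frac{1}{n}FP\mathbf{\lambda}=\frac{1}{n}\overline{F}\mathbf{\lambda}=\mathbf{c}$. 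Thus $C$ is a real circulant with the correct spectrum, and since circulants are permutative, the whole problem collapses to showing that every entry $c_m$ is nonnegative.

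The nonnegativity of $\mathbf{c}$ is the crux. Writing $c_m$ as the real number it is and separating the Perron term gives $n c_m=\lambda_1+\sum_{j=2}^{n}\func{Re}\!\left(\lambda_j\,\omega^{-m(j-1)}\right)$. I would group each conjugate pair $(\lambda_j,\lambda_{n-j+2})$ and use the Suleimanova cone condition $\func{Re}\lambda_j\leq 0$, $|\func{Im}\lambda_j|\leq|\func{Re}\lambda_j|$ together with $\lambda_1\geq\sum_{j=2}^{n}|\func{Re}\lambda_j|$ (which is exactly the hypothesis $\sum_j\lambda_j\geq 0$, since the pairing makes $\sum_{j\geq 2}\lambda_j$ real and nonpositive) to argue that the Perron term dominates the oscillatory sum. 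A convenient organizing device is to treat the zero-sum case first: replace $\Lambda$ by $\Lambda_\alpha=\{\lambda_1-\alpha,\lambda_2,\ldots,\lambda_n\}$ with $\alpha=\sum_j\lambda_j$, build the associated trace-zero circulant, and then recover $\Lambda$ by adding the nonnegative circulant $\frac{\alpha}{n}\mathbf{e}\mathbf{e}^T$, which by Theorem \ref{Bra} shifts only the Perron eigenvalue from $\lambda_1-\alpha$ back to $\lambda_1$ — exactly as in the first theorem of this section.

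The step I expect to be the main obstacle is precisely this sign control. The bound $|\lambda_j|\leq\sqrt{2}\,|\func{Re}\lambda_j|$ coming from the $45^{\circ}$ cone is not by itself strong enough to beat the crude estimate $\func{Re}(\lambda_j\omega^{-m(j-1)})\geq-|\lambda_j|$, so the argument must deploy the cone condition together with the conjugate pairing much more sharply, keeping careful track of which root of unity each eigenvalue is attached to. If the single-circulant estimate resists being pushed through for every admissible list, the natural fallback is to partition $\Lambda$ into triples $\{\mu,\lambda,\overline{\lambda}\}$, each realizable by a $3\times 3$ circulant as in Corollary \ref{Co1}, and assemble them into a block permutative nonnegative matrix via Theorem \ref{So3} and the block construction following it.
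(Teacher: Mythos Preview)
Your overall architecture is exactly the paper's: realize $\Lambda$ by a circulant (hence permutative) matrix, reduce first to the trace--zero list $\Lambda_{\mu}=\{\mu,\lambda_2,\ldots,\lambda_n\}$ with $\mu=-\sum_{j\ge 2}\lambda_j$, and then recover $\Lambda$ by adding the nonnegative rank--one circulant $\tfrac{\alpha}{n}\mathbf{ee}^{T}$ via Brauer. Your discussion of why $\mathbf c=\tfrac1n\overline F\boldsymbol\lambda$ is real and gives the right spectrum is fine and matches what the paper takes for granted.

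The genuine gap is the one you yourself flag: you never actually prove $c_k\ge 0$, you only argue around it and then retreat to a fallback. The paper does not leave this open. It writes out, for $n=2m+1$,
\[
c_k=\frac{2}{n}\sum_{j=2}^{m+1}\Bigl[(\cos\theta_{kj}-1)\operatorname{Re}\lambda_j+\operatorname{Im}\lambda_j\,\sin\theta_{kj}\Bigr],\qquad \theta_{kj}=\tfrac{2k(j-1)\pi}{n},
\]
which is obtained precisely by substituting $\mu=-2\sum_{j=2}^{m+1}\operatorname{Re}\lambda_j$ into the inverse DFT formula. The paper then asserts that each summand is nonnegative: $(\cos\theta_{kj}-1)\operatorname{Re}\lambda_j\ge 0$ because $\operatorname{Re}\lambda_j\le 0$, and $\operatorname{Im}\lambda_j\,\sin\theta_{kj}\ge 0$. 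So the ``sharper'' use of the cone condition that you anticipate is, in the paper, replaced by a direct sign check on the trigonometric expansion; you were one substitution away from this formula. Note, incidentally, that the paper's claim on the sine term is stated without further justification and is not self--evidently true for every $k$ and every admissible ordering of the conjugate pairs; if you want a fully rigorous version you will have to argue more carefully there, combining the cosine and sine contributions and invoking $|\operatorname{Im}\lambda_j|\le|\operatorname{Re}\lambda_j|$ rather than treating them separately.

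Your proposed fallback does not rescue the theorem as stated. Partitioning $\Lambda$ into triples $\{\mu_i,\lambda_i,\overline{\lambda_i}\}$ and taking a direct sum of $3\times 3$ circulants yields only a direct sum of permutative matrices, not a permutative matrix; and Theorem~\ref{So3} cannot be invoked in general because it requires the partition $\Lambda_0\cup\Lambda_1\cup\cdots\cup\Lambda_1$ with all non--leading pieces identical, which a generic list of distinct conjugate pairs will not admit.
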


\begin{proof}
The condition is necessary. Suppose $\dsum\limits_{j=1}^{n}\lambda _{j}\geq
0.$ Let $\mu =-\dsum\limits_{j=2}^{n}\lambda _{j}.$ Then the list $\Lambda
_{\mu }=\{\mu ,\lambda _{2},\ldots ,\lambda _{n}\}$ is realizable. From (\ref%
{F}) we have that

\begin{equation}
c_{k}=\frac{1}{2m+1}\left( 
\begin{array}{c}
\mu +2\dsum\limits_{j=2}^{m+1}\func{Re}\lambda _{j}\cos \frac{2k(j-1)\pi }{%
2m+1}+ \\ 
+2\dsum\limits_{j=2}^{m+1}\func{Im}\lambda _{j}\sin \frac{2k(j-1)\pi }{2m+1}%
\end{array}%
\right) ,  \label{ckk}
\end{equation}%
$k=0,1,\ldots ,2m$ \ if $n=2m+1,$ and%
\begin{equation*}
c_{k}=\frac{1}{2m+2}\left( 
\begin{array}{c}
\mu +2\dsum\limits_{j=2}^{m+1}\func{Re}\lambda _{j}\cos \frac{k(j-1)\pi }{m+1%
}+(-1)^{k}\lambda _{m+2}+ \\ 
+2\dsum\limits_{j=2}^{m+1}\func{Im}\lambda _{j}\sin \frac{k(j-1)\pi }{m+1}%
\end{array}%
\right) ,
\end{equation*}%
$k=0,1,\ldots ,2m+1$ \ if $n=2m+2.$ Let $n=2m+1.$ Then%
\begin{equation}
c_{k}=\frac{1}{2m+1}\left( 
\begin{array}{c}
2\dsum\limits_{j=2}^{m+1}{\LARGE (}\cos \frac{2k(j-1)\pi }{2m+1}-1{\LARGE )}%
\func{Re}\lambda _{j}+ \\ 
+2\dsum\limits_{j=2}^{m+1}\func{Im}\lambda _{j}\sin \frac{2k(j-1)\pi }{2m+1}%
\end{array}%
\right) .  \label{ckk1}
\end{equation}%
Since $\func{Re}\lambda _{j}\leq 0,$ then ${\LARGE (}\cos \frac{2k(j-1)\pi }{%
2m+1}-1{\LARGE )}\func{Re}\lambda _{j}\geq 0,$ $j=2,\ldots ,m+1.$ Moreover Im%
$\lambda _{j}\sin \frac{2k(j-1)\pi }{2m+1}\geq 0,$ $j=2,\ldots ,m+1,$ $%
k=0,1,\ldots ,2m.$ Then 
\begin{equation*}
c_{k}\geq 0,\text{ \ }k=0,1,\ldots ,2m,
\end{equation*}%
and $C=circ\left( c_{0},c_{1},\ldots ,c_{n-1}\right) $ is a circulant
nonnegative matrix with spectrum $\Lambda _{\mu }$, which is also
permutative nonnegative. The proof is similar for $n=2m+2.$ If $%
\dsum\limits_{j=1}^{n}\lambda _{j}=\alpha >0,$ then $\alpha =\lambda
_{1}-\mu ,$ and the matrix $C^{\prime }=C+\frac{\alpha }{n}\mathbf{ee}^{T}$
is circulant nonnegative (permutative nonnegative) with spectrum $\Lambda .$
\end{proof}

\end{document}